\documentclass[notitlepage,11pt,reqno]{amsart}
\usepackage[foot]{amsaddr}
\usepackage{amssymb,nicefrac,bm,upgreek,mathtools,verbatim}
\usepackage[final]{hyperref}
\usepackage[mathscr]{eucal}
\usepackage{dsfont}
\usepackage[normalem]{ulem}
\usepackage{amsopn,esint}
\usepackage[pagewise]{lineno}
\usepackage[T1]{fontenc}
\usepackage[utf8]{inputenc}
\usepackage{apptools}
\AtAppendix{\counterwithin{lemma}{section}} 

\usepackage[margin=1in]{geometry}
\raggedbottom

\newcommand{\stkout}[1]{\ifmmode\text{\sout{\ensuremath{#1}}}\else\sout{#1}\fi}

\newtheorem{lemma}{Lemma}[section]
\newtheorem{theorem}{Theorem}[section]

\theoremstyle{definition}
\newtheorem{definition}{Definition}[section]

\newtheorem{example}{Example}[section]

\newtheorem{remark}{Remark}[section]
\numberwithin{theorem}{section}
\numberwithin{equation}{section}

\hypersetup{
  colorlinks=true,
  citecolor=dmagenta,
  linkcolor=dblue,
  urlcolor = dmagenta,
  anchorcolor = dblue,
  frenchlinks=false,
  pdfborder={0 0 0},
  naturalnames=false,
  hypertexnames=false,
  breaklinks}
\usepackage[capitalize,nameinlink]{cleveref}

\usepackage[abbrev,msc-links,nobysame,citation-order]{amsrefs}

\crefname{section}{Section}{Sections}
\crefname{subsection}{Section}{Sections}
\crefname{condition}{Condition}{Conditions}
\crefname{hypothesis}{Hypothesis}{Conditions}
\crefname{assumption}{Assumption}{Assumptions}
\crefname{lemma}{Lemma}{Lemmas}
\crefname{fact}{Fact}{Facts}

\Crefname{figure}{Figure}{Figures}

\crefformat{equation}{\textup{#2(#1)#3}}
\crefrangeformat{equation}{\textup{#3(#1)#4--#5(#2)#6}}
\crefmultiformat{equation}{\textup{#2(#1)#3}}{ and \textup{#2(#1)#3}}
{, \textup{#2(#1)#3}}{, and \textup{#2(#1)#3}}
\crefrangemultiformat{equation}{\textup{#3(#1)#4--#5(#2)#6}}%
{ and \textup{#3(#1)#4--#5(#2)#6}}{, \textup{#3(#1)#4--#5(#2)#6}}%
{, and \textup{#3(#1)#4--#5(#2)#6}}

\Crefformat{equation}{#2Equation~\textup{(#1)}#3}
\Crefrangeformat{equation}{Equations~\textup{#3(#1)#4--#5(#2)#6}}
\Crefmultiformat{equation}{Equations~\textup{#2(#1)#3}}{ and \textup{#2(#1)#3}}
{, \textup{#2(#1)#3}}{, and \textup{#2(#1)#3}}
\Crefrangemultiformat{equation}{Equations~\textup{#3(#1)#4--#5(#2)#6}}%
{ and \textup{#3(#1)#4--#5(#2)#6}}{, \textup{#3(#1)#4--#5(#2)#6}}%
{, and \textup{#3(#1)#4--#5(#2)#6}}

\crefdefaultlabelformat{#2\textup{#1}#3}
%

\newcommand{\vertiii}[1]{{\left\vert\kern-0.25ex\left\vert\kern-0.25ex\left\vert #1
    \right\vert\kern-0.25ex\right\vert\kern-0.25ex\right\vert}}





\newcommand{\cH}{{\mathcal{H}}}  

  %


\newcommand{\sD}{\mathscr{D}}

\newcommand{\RR}{\mathds{R}}

\newcommand{\D}{\mathrm{d}}



\usepackage{color}
\definecolor{dmagenta}{rgb}{.4,.1,.5}
\definecolor{dblue}{rgb}{.0,.4,.7}
\definecolor{mblue}{rgb}{.0,.0,.7}
\definecolor{ddblue}{rgb}{.0,.0,.4}
\definecolor{dred}{rgb}{.5,.0,.0}
\definecolor{dgreen}{rgb}{.0,.5,.0}
\definecolor{Eeom}{rgb}{.0,.0,.5}


\allowdisplaybreaks

\newcommand{\ttl}{\Large Initial boundary value problem for 1D scalar balance laws with strictly convex flux}
\begin{document}
\title[Explicit formula for balance laws in the quarter plane]{\ttl}

\author{Manas R. Sahoo$^1,$ Abhrojyoti Sen$^2$ and Manish Singh$^1$}
\address{$^1$School of Mathematical Sciences, National Institute of Science Education and Research, An OCC of Homi Bhabha National Institute, Bhubaneswar, P.O. Jatni, Khurda, Odisha 752050, India. Email: manas@niser.ac.in, manish@niser.ac.in}
\address{$^2$Department of Mathematics, Indian Institute of Science Education and Research, Dr. Homi Bhabha Road, Pune 411008, India. Email: abhrojyoti.sen@acads.iiserpune.ac.in.}




\begin{abstract}
A Lax-{O}le\u\i nik type explicit formula for 1D scalar balance laws has been recently obtained for the pure initial value problem by Adimurthi et al. in \cite{manish}. In this article, by introducing a suitable boundary functional, we establish a Lax-{O}le\u\i nik type formula for the initial-boundary value problem. For the pure initial value problem, the solution for the corresponding Hamilton-Jacobi equation turns out to be the minimizer of a functional on the set of curves known as {\em $h$-curves}. In the present situation, part of the {\em h-curve} joining any two points in the quarter plane may cross the boundary $x=0$. This phenomenon breaks the simplicity of the minimization process through the boundary functional compared to the case of conservation laws.
Moreover, this complicates the verification of the boundary condition in the sense of Bardos, le Roux, and N\'{e}d\'{e}lec \cite{Bardos}. To verify the boundary condition, the boundary points are classified into three types depending on the structure of the minimizers at those points.  Finally, by introducing characteristic triangles, we construct generalized characteristics and show that the explicit solution is entropy admissible.
\end{abstract}
\keywords{Balance laws; Hamilton-Jacobi equation; explicit formula; h-curves; boundary value problems; Lax-{O}le\u\i nik formula; characteristic triangles; generalised characteristics}
\subjclass[2020]{Primary: 35D30, 35F31, 35L04, 35L65, 35L67}

\maketitle
\tableofcontents

\section{Introduction and main results}
In this article, we are interested in studying the initial boundary value problem for the scalar balance laws in one space dimension. We consider the balance law
\begin{align}\label{equation 1}
u_t+f(u)_x=\alpha(t)u,\,\,\,\,\,\, x>0,\,\, t>0,
\end{align} 
adjoined with the initial and boundary data
\begin{align}\label{IBD}
    u(x,0)=u_0(x),\,\,\, u(0,t)=u_b(t),
\end{align}
where $u_0 \in L^{\infty}([0,\infty))$ and $u_b \in BV([0, \infty)).$
Moreover, we set the following assumptions on the flux $f$ and source term $\alpha$ in \eqref{equation 1}:

$f: \mathbb{R}\to \mathbb{R}$ is a $C^2$ strictly convex function with superlinear growth, i.e., $\lim_{|u| \to \infty} \frac{f(u)}{|u|}=\infty$ and $\alpha \in L^{\infty}([0, \infty); \mathbb{R}).$

Prior to discussing the theoretical developments, let us mention that from an application viewpoint, the population dynamics \cite{sharpe1911problem}, fluid flow in a vessel \cite{mackey1993multistability}, as well as bacterial culture development in an ongoing process of fermentation \cite{gyllenberg1982nonlinear} have been successfully studied using the balance law \eqref{equation 1}.

When $\alpha=0,$ the equation \eqref{equation 1} becomes a scalar conservation law and the corresponding Cauchy problem has been extensively studied in one and higher space dimensions (see \cites{Leflochbook, serrebook, Bressanbook} and references cited  therein). One of the important aspects of their study is to obtain an explicit representation of solutions to the scalar conservation laws. The work of Hopf \cite{hopf} is the first significant result in this direction. He considered the initial value problem for the Burgers' equation (set $f(u)=u^2/2$ and $\alpha(t)=0$ in \eqref{equation 1}) in one space dimension and obtained an explicit formula via vanishing viscosity method. In a subsequent work \cite{lax}, Lax generalized this result and obtained the explicit representation of solutions for any strictly convex flux $f$ with superlinear growth by introducing a variational principle. Joseph, in his work \cite{Joseph88} first considered the Burgers' equation in the quarter plane, and obtained the explicit formula by introducing a new boundary functional and utilizing the vanishing viscosity method. In \cite{joseph}, Joseph and Gowda extended the prior result and studied the initial boundary value problem for the scalar conservation laws (set $\alpha=0$ in \eqref{equation 1}) in the framework of Lax. It is shown that the explicit representation of solutions to the initial boundary value problem depends on the minimization of two kinds of functionals-initial and boundary functionals and the relation between them at the boundary.
Recently, Adimurthi et al. \cite{manish} extended the results of Lax \cite{lax} and derived the Lax-{O}le\u\i nik formula to the initial value problem for the scalar balance laws \eqref{equation 1}. In this current contribution, we address the question concerning the explicit representation of solutions to the initial boundary value problem associated with \eqref{equation 1}. Although, in our case, we broadly follow the path of \cite{joseph}, a large amount of new and different strategies are required to employ the variational approach. One of the primary reasons for these difficulties is that we have less control over the behaviour of the characteristic curves in the quarter plane which happen to be straight lines in the $\alpha=0$ case.  We will describe this phenomenon in detail in the subsequent sections.

For the initial boundary value problem, one of the key issues is to define the correct notion of boundary condition as the solution may not satisfy the boundary data in the strong sense at the boundary. We show that the derived explicit formula for the solution satisfies the initial condition in the strong sense and a weak form of boundary condition in the sense of Bardos- le Roux- N\'{e}d\'{e}lec \cite{Bardos}. Let $u_b(t)$ be a boundary data given in \eqref{IBD}, then the weak form of boundary condition in the sense of  Bardos- le Roux- N\'{e}d\'{e}lec for the solution $u(x, t)$ is the following:
\begin{equation*}
\sup_{k \in I(u(0+,t),u_b(t))} Sgn \big[\big(u(0+,t)-k\big)\big(f(u(0+,t))-f(k)\big)\big]=0, \,\, \textnormal{for almost every $t>0$},
\end{equation*}
where
\[I(u(0+,t), u_b(t))=\big[\min\{u(0+,t), u_b(t)\}, \max\{u(0+,t), u_b(t)\} \big].\] For strictly convex $f,$ the above condition can be simplified (see \cite{lefloch}) to an equivalent form,\\
\begin{equation}\label{boundary condition}
    \begin{aligned}
    \begin{cases}
    \textnormal {either}\,\,\, u(0+, t)=\bar{u_b}(t),\\
    \textnormal{or}\,\,\,\, f^{\prime}(u(0+,t)))\leq 0 \,\,\, \textnormal{and} \,\,\, f(u(0+, t)) \geq f(\bar{u_b}(t)).
    \end{cases}
    \end{aligned}
\end{equation}
Here $\bar{u_b}(t)=\max\{u_b(t), \lambda_f\},$  where $\lambda_f$ is the point where $f^{\prime}$ changes its sign. 
\begin{remark}
    Note that Bardos et al. \cite{Bardos} derived the weak form of boundary condition by considering a general quasi-linear hyperbolic equation in a bounded domain $\Omega \times [0, T],$ where $\Omega \subset \mathbb{R}^d$ with a piecewise regular boundary. The above mentioned form of the boundary condition \eqref{boundary condition} is suitably reformulated by LeFloch \cite{lefloch} (also see \cite{F10} for two boundaries) to establish the existence and uniqueness of scalar conservation laws with strictly convex flux in the quarter plane.
\end{remark}
In the subsequent subsection, we provide a concise overview of prior research endeavours that have examined balance laws.
\subsection{Background of balance laws}
The general form of a Cauchy problem of 1D scalar balance laws reads
\begin{align}\label{general balance laws}
    &u_t+f(u)_x=g(x, t, u),\\
    &u(x,0)=u_0(x)\nonumber.
    \end{align}
Dafermos \cite{MR457947} studied the structure of the solution of the above Cauchy problem for more general flux with source field, where $f\in C^2, f_{uu}>0,$ $g\in C^1$ and also $f,g$ depends on time and space variables. However, in \cite{MR457947} the construction procedure of the solution was not discussed. The main tool for studying the structure of the solution is to study the generalized characteristics associated with the corresponding solution. The author deals with a solution $u(x,t)$ which is in the class of $BV_{loc}$ on $\mathbb{R}\times [0, \infty)$ and $u(\cdot, t)$ has locally bounded variation in $x$ on $\mathbb{R}$ for any fixed time  $t\in [0, \infty).$
Regarding the well-posedness theory for the initial boundary value problem, we refer to the work of Colombo and Rosini \cite{CR05} who considered the system of equations of type \eqref{general balance laws} in a domain $\Omega=\left\{(x,t)\in \mathbb{R}^2 \,|\, x\geq \psi(t)\,\, \text{and}\,\, t\geq t_0 \right\}$ for $t_0\in \mathbb{R}$ and $\psi\in C^0\left([t, \infty), \mathbb{R}\right)$ and also see \cites{CR07, CG10} for the system of balance laws with non-characteristic boundary. Moreover, in \cite{CR15} Colombo and Rossi established the well-posedness theory for a more general version of \eqref{general balance laws} in the multidimensional bounded domain $\Omega$ with non-zero boundary data $u_b \in \left(BV\cap L^{\infty}\right)\left(\Omega \times [0, T], \mathbb{R}\right)$ via obtaining rigorous TV bounds on the solution. Recently, in \cite{ABBCN23}, the authors showed that the solution of \eqref{general balance laws} with the source depending only on $(x,t)$ and having integral bound can behave wildly and some conjectures on the regularity of solutions are posed.

Another physically important equation is the balance laws with a moving source, for instance, 
 \[u_t+f(u)_x=C(x)h(u).\] The existence theory for the above equation via the vanishing viscosity method has been studied by Dias and LeFloch in \cite{MR1924285} and considering different appropriate conditions on the source field $C(x)h(u),$ the large time behaviour of the solution has been studied by Liu \cite{MR913412}. There are many pieces of literature where the large-time behaviour has been extensively studied, possibly for the general source term $g(x,t, u).$ Depending on the choice of method, suitable conditions on the flux and the source are taken in different situations, for some of the works we refer to \cites{MR895101, MR1184059, MR1286920, MR1245070, MR1270661, MR1693629, MR2172699, MR1751427, MR1432096, MR1427730, MR1808451} and the references therein. We emphasize that the works mentioned above did not consider the construction of explicit formulas. Even when the source depends only on $(x,t)$ (but not on the unknown), the question is not well settled. 

  In the case of balance laws, because of the wild behavior of the characteristics, getting the Lax-{O}le\u\i nik type representation for the solution is not always possible for a large class of balance laws. On the other hand, the balance laws are mathematically significant as the balance term breaks the symmetry of the equation. Therefore, it is challenging to obtain a rarefaction solution in contrast to the case of conservation law (zero balance term). In the most basic example of \eqref{equation 1}, our goal is to investigate the potential of establishing a Lax-{O}le\u\i nik type formula for the initial-boundary value problem.
 In the following subsection, we describe the main contributions of this paper in detail.
\subsection{Main results} As mentioned earlier, several authors studied the scalar balance laws from the perspective of the structure of the solution, asymptotic behaviour etc. However, even for the initial value problem, the explicit construction of a solution was unavailable. Adimurthi et al.\cite{manish} first studied the Lax-{O}le\u\i nik type formula for the pure initial value problem.  
In this paper, we extend the previous results of \cite{manish} to the initial boundary value problem. The balance laws \eqref{equation 1} can be rephrased as
\begin{align}\label{new conservation laws}
    w_t+ e^{-\beta(t)}f\left(e^{\beta(t)}w\right)_x=0,
\end{align}
where $w=e^{-\beta(t)}u$ and $\beta(t)=\int_0^t \alpha(\theta)d\theta.$ 
Furthermore, the counterpart of the conservation laws \eqref{new conservation laws} is the Hamilton-Jacobi equation  \[W_t+e^{-\beta(t)}f\left(e^{\beta(t)}W_x\right)=0,\] 
adjoined with the following initial and boundary data:
\begin{align*}
W(x, 0)=\int_{0}^{x} u_0(y)dy,\,\,\, W_x (0, t)=e^{\beta(t)}u_b (t).
\end{align*}  
Now motivated by the works of Joseph and Gowda \cites{joseph, Joseph88}, for any Lipschitz continuous path $\gamma:[0, t]\to [0, \infty)$ with $\gamma(t)=x,$ we introduce the following functional,
\begin{align}\label{First functional}
    J(\gamma,x,t)=\int_{\{\gamma(\theta)\neq 0\}}f^*(\dot{\gamma}(\theta))e^{-\beta(\theta)}d\theta-\int_{\{\gamma(\theta)= 0\}}f(\bar{u_b}(\theta))e^{-\beta(\theta)}d\theta +W(\gamma(0),0).
 \end{align}
The above functional \eqref{First functional} will be minimized over all such Lipschitz curves, although the minimizing curve might not be unique. Moreover, the minimizing curves have specific geometric structures and will be known as the {\it characteristics}. 

In the case of pure initial value problem \cite{manish}, the characteristic curves turn out to be {\em h-curves}, and in general, they are not straight lines. Below, let us give the definition {\em h-curves} suitable adapted to the quarter plane.
\begin{definition}[\textbf{\textit{h}-curves}]\label{defn h curve}
Given any two points $(x_1, t_1)$ and $(x_2, t_2)$ in $[0, \infty)\times [0, \infty)$ with $0\leq t_2< t_1,$  the curve $X(t)$ with $X(t_1)=x_1$ and $X(t_2)=x_2$ is called a {\em h-curves} if there exists a unique $y_0\in \mathbb{R}$ such that following relation holds, 
\begin{align*}
    \dot{X}(t)=f^{\prime}\left(y_0 e^{\beta(t)}\right).
\end{align*}
Moreover, for $x_2=0,$ we denote the unique point $y_0$ as $h(x_1,t_1,t_2)$ and for $t_2=0,$ we denote the unique point $y_0$ as $h(x_1-x_2, t_1).$
\end{definition}
In the case of the initial boundary value problem, {\em h-curves} joining any two points of the quarter plane can cross the boundary and go beyond the prescribed domain (see \cref{example1}). Therefore such curves are not allowed to be a part of the minimization process and this complicates the structure of characteristics (see \cref{example2}). To be more precise, characteristics may oscillate along the boundary (which is $t$-axis) and meet the boundary at countably many points.  
This phenomenon is completely different from the homogeneous case $\alpha(t)=0$ in which Joseph and Gowda \cite{joseph} showed that the characteristic curve is at most of three pieces of line segments. Unlike them, in this scenario, characteristic curves obtained via minimization may consist of countably many pieces of {\em h-curves}, and this leads to new difficulties in utilizing the variational approach.

As stated above, {\em h-curves} joining any two points can leave the quarter plane. Therefore, we introduce two sets $H_A (x,t)$ and $H_B (x,t)$ as follows:
\begin{align*}
&H_{A} (x,t)=\left\{y\,\,\Bigg|\,\, y+\int_{0}^\tau f^{\prime}\left(h(x-y,t)e^{\beta (\theta)}\right)d\theta \geq 0,\,\,\, \text{for fix}\,\, (x,t)\,\, \text{and}\,\, \,\,\, 0\leq \tau \leq t\right\},\\
&H_{B} (x,t)=\left\{\tau\,\,\Bigg| \int_{\tau}^s f^{\prime}\left(h(x,t, \tau)e^{\beta (\theta)}\right)d\theta \geq 0,\,\,\, \text{for fix}\,\, (x,t)\,\, \text{and}\,\, 0\leq \tau \leq s \leq t\right\}.
\end{align*}
Now following \cite{joseph}, we introduce the initial functional $A(y, x, t)$ as,
\begin{equation}\label{initial functional}
A(y,x, t)=\int_0^t e^{-\beta(\theta)} f^*\left(f^{\prime}\left(h(x-y,t)e^{\beta(\theta)}\right)\right)d\theta +\int_0^y u_0(\theta)d\theta,
\end{equation}
for $y\in H_A(x,t),$ and boundary functional $B(\tau, x, t)$ as,
\begin{equation}\label{boundary functional}
B(\tau, x, t)=\int_{\tau}^t e^{-\beta(\theta)} f^*\left(f^{\prime}\left(h(x,t, \tau)e^{\beta(\theta)}\right)\right)d\theta +W(0, \tau),
\end{equation}
for $\tau\in H_B(x, t).$
Here $f^*$ is the convex dual of $f$ defined as $f^*(u)=\displaystyle{\max_{v\in \mathbb{R}}}\left\{uv-f(v)\right\},$ and 
\begin{align*}
    W(0, \tau):=\inf_{\gamma}\int_{\gamma(\theta)\neq 0}f^*(\dot{\gamma}(\theta))e^{-\beta(\theta)}-\int_{\gamma(\theta)=0}f(\bar{u}_b(\theta))e^{-\beta(\theta)}d\theta+W(\gamma(0), 0).
\end{align*}
Minimization in the above functional is taken over all Lipschitz paths in the quarter plane joining $(0,\tau)$ to $(\gamma(0), 0)$ such that $\gamma(\tau)=0.$

Also, for a given $(x,t),$ $y_*(x,t)$ and $y^*(x,t)$ is the leftmost and rightmost points on the $x$-axis from the set $H_A(x,t)$ such that
\begin{align}\label{definition A}
    A(x,t)=\min_{y\in H_A(x,t)} A(y,x, t)=A(y_*(x,t), x,t)=A(y^*(x,t), x, t)
\end{align}
and $\tau_*(x,t)$ and $\tau^*(x,t)$ is the lowermost and uppermost points on the $t$-axis from the set $H_B(x,t)$ such that
\begin{align}\label{definition B}
    B(x,t)=\min_{ \tau\in H_B(x,t)} B(\tau, x, t)=B(\tau^*(x,t), x,t)=B(\tau_*(x,t), x,t).
\end{align}
Note that the superlinear growth of $f^*$ followed by an application of Jensen's inequality gives the existence of the above minimization, and the minimizer lies in a compact interval for any fixed $(x,t)$. Therefore $y^*(x,t), \tau_*(x,t)$ are well defined. This implies that $h\left(x-y_*(x,t), t\right)$ and $h\left(x,t, \tau_*(x,t)\right)$ are also well defined. Proof of these facts is available in \cref{min exists} and \cref{min h curve}, respectively.

Now, we are ready to state the main results. Before that, let us quickly recall the solution concept for balance laws and introduce the necessary definitions. It is well known that the solution of balance laws may develop discontinuity after a finite time even if the initial data is smooth. Therefore, the solution is always considered in the weak sense. The weak formulation to \eqref{equation 1} is the following:
\begin{definition}[\textbf{Weak solution}]\label{weak formulation}
    A function $t\mapsto u(x,t) \in C\left([0,T]; L^1_{loc}(0, \infty)\right)$ is said to be a weak solution to \eqref{equation 1} if the following integral identity
    \begin{align*}
        \int_0^{\infty}\int_0^{\infty} u \varphi_t +f(u)\varphi_x-\alpha(t)u \varphi \,dx dt + \int_0^{\infty}u_0(x)\varphi(x,0)\, dx=0
    \end{align*}
    holds for all $\varphi \in C^{\infty}_c ((0,\infty)\times [0,\infty))$
and $u(\cdot, t)\in BV (\mathbb{R}^+, \mathbb{R})$ for any fixed $t>0,$ and satisfies
\eqref{boundary condition}.
\end{definition}
\noindent Next, we define the controlled curves.
\begin{definition}[\textbf{Controlled curves}] Let $x\in [0, \infty)$ and   $0\leq s\leq t$, we then define the set of controlled curves to be
\begin{align*}
\Gamma(x,t,s) 	&:=\Big\{\gamma :[s,t]\to [0, \infty),\,\,\gamma(t)=x\,\,\, \text{and}\,\,\gamma\,\,\,\text{is a Lipschitz function}\Big\},\\
\Gamma(x,t)		&:=\Gamma(x,t,0).
\end{align*}
\end{definition}
Using the functional $J(\gamma, x,t)$ defined in \eqref{First functional}, for any $0\leq s\leq t$ we define
\begin{align*}
    J(\gamma, x,t, s):=\int_{\{\gamma(\theta)\neq 0\}}f^*\left(\dot{\gamma}(\theta)\right)e^{-\beta(\theta)}d\theta-\int_{\{\gamma(\theta)= 0\}}f\left(\bar{u_b}(\theta)\right)e^{-\beta(\theta)}d\theta +W(\gamma(s),s).
\end{align*}
Now the value functions are defined as
\begin{definition}[\textbf{Value function}]\label{defnvalue} Let  $x\in [0, \infty)$ and  $0\leq s < t.$ We define the value functions by 
\begin{align}
W(x,t) 	&=\inf_{\gamma\in\Gamma(x,t)}J\left(\gamma, x,t\right),\nonumber\\
W(x,t,s) 	&=\inf_{\gamma\in\Gamma(x,t,s)}J\left(\gamma, x, t, s\right)\label{HL1}.
\end{align}
\end{definition}
\noindent One of the main results of this article is stated below.
\begin{theorem}[\textbf{Explicit representation of solution}]\label{thm explicit formula} The value function $W(x,t)$ in \cref{defnvalue} is a locally Lipschitz continuous function in $(0, \infty)\times (0, \infty)$ and the explicit formula of its partial derivative $W_x=w$, defined almost everywhere is given by 
\begin{equation*}
\begin{aligned}
    w(x,t)=\begin{cases}
    \,h\left(x-y_*(x,t), t\right) \,\,\,\,\,\,\,\, &\textnormal{if}\,\,\,\, A(x,t)\leq B(x,t),\\
    \,h\left(x,t, \tau_*(x,t)\right)\,\,\,\,\,\,\,\,\,\,\, &\textnormal{if}\,\,\,\, A(x,t)> B(x,t),
    \end{cases}
    \end{aligned}
\end{equation*}
  where $\beta(t)=\int_0^t \alpha(\theta)d\theta$ and $A(x,t), B(x,t)$ are given by \eqref{definition A}-\eqref{definition B}.  Furthermore, $u(x,t)=e^{\beta(t)}w(x,t)$ satisfies \eqref{equation 1}-\eqref{IBD} in the sense of \cref{weak formulation}.
 \end{theorem}
One of the crucial tasks in the above theorem is to prove that the constructed solution satisfies the boundary condition \eqref{boundary condition}. Due to the structure of the characteristic curves, the minimizer $\tau(x,t)\not \to t$  always as $x\to 0+$, and therefore the proof of boundary condition is also not direct. We settle this issue by introducing a classification of boundary points depending upon the structure of characteristics and using the dynamic programming principle repeatedly at suitable time slices.

Next, we describe the solution by constructing the generalized characteristic. For that purpose, we need to introduce different types of characteristics triangles with an apex $(x,t), x\geq 0, t>0,$ depending on the different cases $A(x,t)<B(x,t), A(x,t)>B(x,t)$ or $A(x,t)=B(x,t).$ More precisely, we construct a curve $\mathtt{X}(t)$ such that $\dot{\mathtt{X}}(t)=f^{\prime}(u(\mathtt{X}(t),t)).$ In fact, we prove the following:
\begin{theorem}[\textbf{Generalized characteristics}]\label{GC thm}
    Let $t_1>0$ be fixed. Then for each $(x_1,t_1),$ there exists a unique Lipschitz continuous curve $x=\mathtt{X}(t)$ with $\mathtt{X}(t_1)=x_1$ such that the characteristic triangles associated with each point on the curve forms an increasing family of triangles and we have $u(x,t)= (f^{\prime})^{-1}\left(\dot{\mathtt{X}}(t)\right)$ almost everywhere in the quarter plane.
\end{theorem}

A more rigorous statement is presented in the \cref{thm4.1} on \cref{section 4}, which says that the solution would satisfy the Rankine-Hugoniot condition along the discontinuity curve. The construction of such generalized characteristics may be useful while studying the structure of the solution to the initial boundary value problem for \eqref{equation 1} and balance laws with discontinuous flux, which will be our next goal of research. 

Now we address the issue of the uniqueness of the entropy solution. Lax entropy admissibility of the explicit solution presented in the \cref{thm explicit formula} is shown in  \cref{Entropy condition}. Following the same proof as in the homogeneous case ($\alpha=0$), one can easily obtain that if $u(x,t)$ is a weak solution in the sense of \cref{weak formulation} satisfying Lax entropy condition and is piecewise smooth, then it satisfies the Kruzkov's entropy inequality. Hence $u$ is the unique entropy solution that is given by the vanishing viscosity method (see \cite[page 1028]{Bardos}).
\begin{theorem}[\textbf{Uniqueness of entropy solution}]\label{UES}
Let $u, v \in L^{\infty}\cap BV_{loc}$ are two solutions to \eqref{equation 1}-\eqref{IBD} in the sense of \cref{weak formulation} with the initial and boundary datum $u_0 \in L^1([0, \infty))\cap L^{\infty}([0, \infty)),$ $u_b\in BV([0, \infty))$ respectively and satisfies the Lax entropy condition \eqref{entropy}. Moreover, assume that $u, v$ are $C^1$ except a discrete set of Lipschitz curves, then $u\equiv v.$
\end{theorem}
We finish this section with an example showing that minimization can not be taken over all {\em h-curves} joining any point $(x, t)$ and $(y,0)$ for balance laws, as the {\em h-curve} joining these points may leave the domain-the quarter plane. 
\begin{example}\label{example1}
Consider the balance law 
\begin{align*}
\begin{cases}
&u_t + \Big(\frac{(u-60)^2}{2}\Big)_x=\Big(\frac{12t^2-60t+70}{4t^3-30t^2+70t+10}\Big) u,\\
&u_0(x)=10,\,\,\, \text{and}\,\,\, u_b \,\,\text{is any bounded measurable function.} 
\end{cases}
\end{align*}
The {\em h-curve} $X(t)$  joining $(24, 5)$ and $(0, 24)$ satisfies
\begin{align}\label{1stchar}
X^{\prime}(t)=10 e^{\beta (t)}- 60,\,\,\, X(0)=24, \,\, X(5)=24
\end{align} where \[\beta(t)=\int_0^t \frac{12\theta^2-60\theta+70}{4\theta^3-30\theta^2+70\theta+10} d\theta.\]
Then one can solve the above equation \eqref{1stchar} to get \[X(t)=(t-1)(t-2)(t-3)(t-4).\] This crosses the $t$-axis four times, and parts of the curve lie outside the quarter plane. It is easy to see that $\alpha(t)$ is bounded as $t\to 0$ and $t\to \infty.$ Moreover the roots of the denominator are either negative or complex. Hence $\alpha \in L^{\infty}([0, \infty),\mathbb{R}).$
\end{example}
\subsection{Organization of the paper} The rest of the article is organized as follows. In \cref{section 2}, we prove \cref{thm explicit formula} via a variational approach and discuss the boundary conditions. In \cref{section 4}, we construct generalized characteristics (\cref{GC thm}) and show that the solution for the initial-boundary value problem is entropy admissible in the sense of Lax. Finally, using this and the boundary condition \eqref{boundary condition}, we conclude \cref{UES}.

\section{The explicit formula}\label{section 2}
This section aims to prove \cref{thm explicit formula}. The proof of \cref{thm explicit formula} needs several preparatory lemmas. 

We denote the quarter plane as $Q,$ i.e., $Q=[0, \infty)\times [0, \infty).$ 
Below, we prove the dynamic programming principle which is the first result towards proving \cref{thm explicit formula}.
\begin{lemma}[\textbf{Dynamic programming principle}]\label{DPP}
Let $(x,t)\in Q.$ Then the value function $W(x,t)$ satisfies the dynamic programming principle, i.e., for any $0\leq s \leq t,$ we have
$$W(x, t)= W(x,t,s).$$ 
\end{lemma} 
\begin{proof}
First, we prove that $W(x,t)\geq W(x,t,s).$ Let $\gamma:[0, t]\to [0, \infty)$ be any Lipschitz continuous curve satisfying $\gamma(t)=x$.
Then we have
\begin{align*}
    J(\gamma, x,  t)= &\int_{\{\gamma(\theta)\neq 0\}\cap [s, t]}f^*(\dot{\gamma}(\theta))e^{-\beta(\theta)}d\theta-\int_{\{\gamma(\theta)=0\}\cap [s, t]}f(\bar{u_b}(\theta))e^{-\beta(\theta)}d\theta \\
   & +\int_{\{ \gamma(\theta)\neq 0\} \cap [0, s]}f^*(\dot{\gamma}(\theta))e^{-\beta(\theta)}d\theta
   -\int_{\{\gamma(\theta)= 0\}\cap [0, s]}f(\bar{u_b}(\theta))e^{-\beta(\theta)}d\theta +W_0(\gamma(0))\\
    &\geq \int_{\{\gamma(\theta)\neq 0\}\cap [s, t]}f^*(\dot{\gamma}(\theta))e^{-\beta(\theta)}d\theta-\int_{\{\gamma(\theta)=0\}\cap [s, t]}f(\bar{u_b}(\theta))e^{-\beta(\theta)}d\theta
    +W(\gamma(s), s).
\end{align*}
Therefore 
\begin{align*}
    J(\gamma, x, t)\geq \inf _{\gamma \in \Gamma(x,t,s)}\int_{\{\gamma(\theta)\neq 0\}}f^*(\dot{\gamma}(\theta))e^{-\beta(\theta)}d\theta-\int_{\{\gamma(\theta)=0\}}f(\bar{u_b}(\theta))e^{-\beta(\theta)}d\theta +W(\gamma(s), s).
\end{align*}
Taking infimum over all $\gamma \in \Gamma(x,t),$ we obtain
\begin{equation}\label{dyn1}
    W(x,t)\geq W(x,t,s).
\end{equation}
To establish the reverse inequality, let us recall from \eqref{HL1}:
\begin{align*}
    W(x,t, s)= \inf _{\gamma \in \Gamma(x,t,s)}\int_{\{\gamma(\theta)\neq 0\}}f^*(\dot{\gamma}(\theta))e^{-\beta(\theta)}d\theta-\int_{\{\gamma(\theta)= 0\}}f(\bar{u_b}(\theta))e^{-\beta(\theta)}d\theta +W(\gamma(s), s).
\end{align*}
Now for any $\varepsilon >0 $ there exists a $\gamma_{\varepsilon} \in \Gamma(x,t,s)$ such that 
\begin{equation}\label{eq1}
\begin{aligned}
    &\int_{\{\gamma_{\varepsilon}(\theta)\neq 0\}}f^*(\dot{\gamma_{\varepsilon}}(\theta))e^{-\beta(\theta)}d\theta-\int_{\{\gamma_{\varepsilon}(\theta)=0\}}f(\bar{u_b}(\theta))e^{-\beta(\theta)}d\theta +W(\gamma_{\varepsilon}(s), s)
   \\ &\leq W(x,t, s)+\varepsilon.
    \end{aligned}
\end{equation}
Again for $\varepsilon^{\prime}>0$ there exists a $\gamma_{{\varepsilon}^{\prime}} \in \Gamma(\gamma_{\varepsilon}(s),s) $ such that
\begin{equation}\label{eq2}
\begin{aligned}
    &\int_{\{\gamma_{\varepsilon^{\prime}}(\theta)\neq 0\}}f^*(\dot{\gamma_{\varepsilon^{\prime}}}(\theta))e^{-\beta(\theta)}d\theta-\int_{\{\gamma_{\varepsilon^{\prime}}(\theta)= 0\}}f(\bar{u_b}(\theta))e^{-\beta(\theta)}d\theta +W(\gamma_{\varepsilon^{\prime}}(0), 0)\\
    &\leq W(\gamma_{\varepsilon}(s), s)+\varepsilon^{\prime}.\\
    \end{aligned}
\end{equation}
Let us define the curve
\begin{equation*}
    \begin{aligned}
    \bar{\gamma}(\theta)=\begin{cases}
    \gamma_{\varepsilon^{\prime}}(\theta)\,\,\, &if\,\,\, \theta \in [0,s],\\
    \gamma_{\varepsilon}(\theta)\,\,\, &if\,\,\,\theta \in [s,t].
    \end{cases}
    \end{aligned}
\end{equation*}
Combining the above equations \eqref{eq1} and \eqref{eq2}, we get
\[J(\bar{\gamma}, x, t)\leq W(x,t,s)+\varepsilon +\varepsilon^{\prime}.\] Since $\varepsilon $ and $\varepsilon^{\prime}$ are arbitrary positive numbers, we get
\begin{equation}\label{dyn2}
  W(x,t)\leq W(x,t,s).  
\end{equation}
 The inequalities \eqref{dyn1} and \eqref{dyn2} complete the proof of the dynamic programming principle.
\end{proof}
Despite the fact that the proof is simple, the following lemma is crucial for characterising the minimizers of $J(\gamma,x,t).$ 
\begin{lemma}[\textbf{Minimization via \textit{h}-curve}] \label{min h curve}
Let $S_L$ be the set of all Lipschitz continuous curves in $Q$ joining $(x_1, t_1)$ to $(x_2,t_2)$ with $t_1<t_2,$ i.e.,
\begin{align*}
    S_L:=\left\{\gamma\,\, \Big| \,\,\gamma:[t_1,t_2] \to [0, \infty)\,\, \text{is a Lipschitz function with}\,\, \gamma(t_1)=x_1 \,\, \text{and}\,\, \gamma(t_2)=x_2\right\},
\end{align*}
then we have a unique $X\in S_L$ such that
\begin{align}\label{infh}
\inf_{\gamma \in S_L} \int_{t_1}^{t_2} f^{*}\left(\dot{\gamma} (\theta)\right)e^{-\beta (\theta)} d \theta
= \int_{t_1}^{t_2} f^{*}\left(\dot{X}(\theta)\right)e^{-\beta (\theta)} d \theta,
\end{align}
where $X(t)$ satisfies
\begin{align*}
    \dot{X}(t)=f^{\prime}\left(y_0 e^{\beta(t)}\right)
\end{align*}
 for a unique $y_0\in \RR.$ In other words, $X(t)$ is a { h-curve}. 
\end{lemma}
\begin{proof}
To establish the existence of such curve $X,$ we consider the function $\Phi: \mathbb{R} \to \mathbb{R}$ defined as
\begin{align*}
    \Phi(z):=x_1+\int_{t_1}^{t_2} f^{\prime}\left(z e^{\beta(\theta)}\right)d\theta.
\end{align*}
Since $f$ has superlinear growth, as $z\to \pm \infty,$ we have $\Phi(z)\to \pm \infty.$ On the other hand, since $f^{\prime}$ is increasing, using the intermediate value theorem, we get a unique $y_0\in \mathbb{R}$ such that $\Phi(y_0)=x_2.$ Now setting
\begin{align*}
    X(s)=x_1+\int_{t_1}^{s} f^{\prime}\left(y_0 e^{\beta(\theta)}\right)d\theta \,\,\, \text{for all}\,\,\, s\in [t_1, t_2],
\end{align*}
we obtain the desired curve.

Next to show \eqref{infh}, we use the convexity of $f^*,$ i.e.,
\begin{align}\label{convexity}
    f^*(x) \geq f^*(y) + f^{*\prime}(y)(x-y).
\end{align}
Let $\gamma \in S_L.$ Setting $x=\dot{\gamma}(\theta)$ and $y=\dot{X}(\theta)$ in \eqref{convexity} and integrating over the interval $[t_1, t_2],$ we get
\begin{align} \label{convex inequality}
    \int_{t_1}^{t_2}e^{-\beta(\theta)}f^*\left(\dot\gamma(\theta)\right)d\theta \geq \int_{t_1}^{t_2}e^{-\beta(\theta)}f^*\left(f^{\prime}\left(y_0 e^{\beta(\theta)}\right)\right)d\theta+\int_{t_1}^{t_2}y_0\left(\dot\gamma(\theta)-\dot X (\theta)\right)d\theta,
\end{align}
where we used the identity $f^{* \prime}(\cdot)=(f^{\prime})^{-1}(\cdot).$ Now \eqref{infh} follows from the fact that 
\begin{align*}
\int_{t_1}^{t_2}y_0\left(\dot{\gamma}(\theta)-\dot{X}(\theta)\right)d\theta=0.
\end{align*}
To prove the uniqueness of the minimizer, we observe that if any curve $\gamma$ is different from $X$, then 
$\mu \left(\left\{\theta\,\, |\,\,\dot{\gamma}(\theta)\neq \dot{X}(\theta)\right\}\right)>0$ where $\mu$ denotes the Lebesgue measure. 
This implies the inequality \eqref{convexity} is strict in a set of positive measures, thereby making the inequality \eqref{convex inequality} strict. Therefore the minimizer $X$ is unique.
\end{proof}
The next lemma associates the value function $W(x,t)$ with $A(x,t)$ and $B(x,t)$ given in \eqref{definition A}-\eqref{definition B}.
\begin{lemma}\label{bdf}
Let $x>0,t>0$ be fixed. Then the value function $W(x,t)$ can be represented as 
\begin{align*}
    W(x,t)=\min\left\{A(x,t), B(x,t)\right\}
\end{align*}
where $A(x,t)$ and $B(x,t)$ are given by \eqref{definition A} and \eqref{definition B} respectively.
\end{lemma}
\begin{proof}
 Let $\gamma:[0, t]\to [0,\infty)$ is a Lipschitz continuous curve joining $(x,t)$ to $(y, 0)$ with Lipschitz constant $M_0.$ We consider the following two cases. \\
 {\bf Case 1.} Let $\gamma(\tau_0)=0$ for some $\tau_0\in [0, t)$ and $\gamma(\theta)>0$ for all $\theta \in (\tau_0, t].$ In this case we will show that $B(x, t)\leq J(\gamma, x, t)$.  This will be proved in the following three steps.\\
\noindent{\bf Step 1:} We show that the {\em h-curve} joining the nearby points $(x_1,t_1)$ and $(x_2, t_2)$ (with $x_i, t_i>0$ for $i=1,2$) lying on $\gamma$ completely stays in the interior of $Q.$ Let 
\begin{align*}
    X(s)=x_1+\int_{t_1}^{s} f^{\prime}\left(y_0 e^{\beta(\theta)}\right)d\theta\,\,\, \text{for all}\,\, s\in[t_1, t_2]
\end{align*}
is a {\em h-curve} joining $(x_1, t_1)$ and $(x_2, t_2).$ Then for $y_0 \geq 0 \,\,(resp.\,\,\, y_0 \leq 0),$ we have
\begin{align*}
    \frac{\left(f^{\prime}\right)^{-1}\left(\frac{x_2-x_1}{t_2-t_1}\right)}{e^{\displaystyle{\max_{\theta\in [t_1, t_2]}} \beta(\theta)}} \,\, (resp.\,\,\, \geq) \leq y_0 \leq\,\,(resp. \,\,\, \geq) \frac{\left(f^{\prime}\right)^{-1}\left(\frac{x_2-x_1}{t_2-t_1}\right)}{e^{\displaystyle{\min_{\theta\in [t_1, t_2]}} \beta(\theta)}}.
\end{align*}
This implies 
\begin{align}\label{y_0 bounded}
    \left|y_0\right| \leq \max \left\{\frac{\left(f^{\prime}\right)^{-1}(M_0)}{e^{\displaystyle{\max_{\theta\in [t_1, t_2]}} \beta(\theta)}}, \frac{\left(f^{\prime}\right)^{-1}(M_0)}{e^{\displaystyle{\min_{\theta\in [t_1, t_2]}} \beta(\theta)}}\right\},
\end{align}
where $\left|\frac{x_2-x_1}{t_2-t_1}\right| \leq M_0$ and we conclude that 
\begin{align*}
x_2-x_1=\int_{t_1}^{t_2} f^{\prime}\left(y_0 e^{\beta(\theta)}\right)d\theta \to 0\,\,\, \text{as}\,\,\, t_1\to t_2.
\end{align*}
Therefore if $(x_1,t_1)$ and $(x_2, t_2)$ are nearby points on $\gamma$, the $h$-curve joining these two points completely lies inside the quarter plane.

\noindent{\bf Step 2:} Let $X_{\tau}$ be the {\em h-curve} joining the points $(x,t)$ to $(\gamma(\tau), \tau),$ i.e.,
\begin{align*}
X_{\tau}(s)=\gamma(\tau)+\int_{\tau}^s f^{\prime}\left(y_0(\tau)e^{\beta(\theta)}\right)d\theta,\,\,\, \tau\leq s \leq t.
\end{align*}
We claim that $y_0$ is a continuous function. Indeed, from the step 1, we have that $y_0(\tau)$ is bounded and also, we note that
\begin{align}\label{eqq 2.10}
    x-\gamma(\tau)=\int_{\tau}^t f^{\prime}\left(y_0(\tau)e^{\beta(\theta)}\right)d\theta.
\end{align}
Let us assume that for any sequence $\tau_n\to \bar{\tau},$ there exists a subsequence $\tau_{n_k}$ such that $y_0(\tau_{n_k}) \to \alpha.$ Now passing to the limit in \eqref{eqq 2.10}, we obtain
\begin{align*}
x-\gamma(\bar{\tau})=\int_{\bar{\tau}}^t f^{\prime}\left(\alpha e^{\beta(\theta)}\right)d\theta.
\end{align*}

By the uniqueness of $y_0$, we have $y_0(\bar{\tau})=\alpha.$ This proves the continuity of $y_0.$\\
\noindent{\bf Step 3:} First, let us define a set
\begin{align*}
  H_{\tau_0}:= \left\{\tau \in [\tau_0, t)\,\,\Big|\,\,\textnormal{there exists a $h$-curve in $Q$ joining $(x, t)$ to $(\gamma(\tau), \tau)$}\right\}.
\end{align*}
Clearly $H_{\tau_0}\neq \phi$ by Step 1. Moreover, let $\tau^{\prime}=\displaystyle{\inf_{\tau}} \,H_{\tau_0}.$ If $\tau^{\prime}>\tau_0,$ using the continuity of $y_0$ we conclude that there exists a point $(0, \tau_1)$ on the boundary such that the {\em h-curve} joining $(x,t)$ to $(\gamma(\tau^{\prime}), \tau^{\prime})$ will touch the boundary at $(0, \tau_1).$ Then by \cref{min h curve}, we obtain
\begin{align*}
    \int_{\tau_1}^t e^{-\beta(\theta)} f^*\left(f^{\prime}\left(h(x,t, \tau_1)e^{\beta(\theta)}\right)\right)d\theta + W(0, \tau_1)\leq J(\gamma, x, t),
\end{align*}
which implies $B(\tau_1, x, t)\leq J(\gamma, x, t)$ and hence
\begin{align*}
    B(x, t)=\inf_{\tau_1 \in H_B}B(\tau_1, x, t)\leq J(\gamma, x, t).
\end{align*}
If $\tau^{\prime}=\tau_0,$ in that case $\tau_0=\tau$ and the {\em h-curve} joining $(x, t)$ to $(0, \tau)$ completely lies in $Q.$
Then by the \cref{min h curve}
$$\int_{\tau}^t e^{-\beta(\theta)} f^*\left(f^{\prime}\left(h(x,t, \tau)e^{\beta(\theta)}\right)\right)d\theta +W(0, \tau)\leq J\left(\gamma, x, t\right)$$
and again we conclude that $B(x,t) \leq J(\gamma, x,t).$ Therefore from the \cref{defnvalue}, we have 
\begin{align*}
    B(x,t)=\inf_{\gamma \in \Gamma(x,t)} J(\gamma, x,t)=W(x,t).
\end{align*}

\noindent{\bf Case 2.} Let $\gamma(\theta)>0$ for all $0<\theta\leq t,$ then by the previous analysis one can infer that either there exists
a {\em h-curve} joining $(x, t)$ to $(y, 0)$ that touches $t$-axis at $(0, \tau_1)$
or the {\em h-curve} joining $(x,t)$ to $(y, 0)$ completely lies in $Q$. Therefore, either
\begin{align*}
\int_{\tau_1}^t e^{-\beta(\theta)} f^*\left(f^{\prime}\left(h(x,t, \tau_1)e^{\beta(\theta)}\right)\right)d\theta +W\left(0, \tau_1\right)\leq J\left(\gamma, x, t\right)
\end{align*}
or
\begin{align*}
\int_{0}^t f^{\prime}\left(h(x-y,t)e^{\beta (\theta)}\right)d\theta + U_0 (y)\leq J\left(\gamma, x, t\right).
\end{align*}
This implies either  $B(x, t, \tau_1)\leq J(\gamma, x, t)$ or $A(y, x, t)\leq J(\gamma, x, t).$ 
Therefore we conclude 
\begin{align*}
W(x, t)=\min\left\{A(x, t), B(x,t)\right\}.
\end{align*}
This completes the proof.
\end{proof}
Next, we characterize the value function $W(x,t)$ at the boundary.
\begin{lemma}[\textbf{Characterization of $W(0, t)$}]\label{5.1}
The value function $W(0, t)=\min_{\upgamma}J(\upgamma, 0, t)$ is achieved and if $W(0, t)=J(\gamma, 0, t),$ then the minimizing curve $\gamma$ has the following property: If $(t_1, t_2)\subset\{\theta: \gamma(\theta)\neq 0\},$ then $\gamma:(t_1, t_2)\to (0, \infty)$ is  a $h$-curve.
\end{lemma}
 \begin{proof} First, we show that for any Lipschitz curve $\gamma_0$ joining $(0,t)$ to $(y,0)$, there exists a curve $\eta_0$ with the same endpoints such that $|\eta_0^{\prime}(\theta)|\leq C_0,$ where $C_0$ is independent of the Lipschitz constant of $\gamma_0$ and 
 $J(\eta_0, x, t))\leq J(\gamma_0, x, t).$ An explicit construction of the curve $\eta_0$ is as follows: for each Lipschitz curve $\eta: [0,t]\to [0, \infty),$ we define the set
 \[D_{\eta}= \left\{ \theta \in [0, t] \,\, \Big|\,\, \eta(\theta) \neq \gamma_0(\theta)\right\}.\]
 Now consider the set of all Lipschitz curve $\eta:[0,t]\to [0, \infty)$ such that $\eta(t)=0$ and $\eta(0)=y,$ and define
 \begin{align*}
 S= \Big\{\eta\,\,\Big|\,\, |\eta^{\prime} (t_0)|\leq M \,\,\textnormal{for any}\, t_0 \in D_{\eta} \,\,  \textnormal{and $t_0$ be a Lebesgue point of $\eta^{\prime} (\theta)$} \Big\},
 \end{align*}
 where 
 \begin{align*}
 M= \max\left(\max_{\xi_1, \xi_2 \in [0,t]} f^{\prime}\left( e^{\beta(\xi_1)-\beta(\xi_2)}\lambda_f\right), \,\, \text{Lipschitz constant of}\,\, \gamma_0\right).
 \end{align*}

Let $\eta_{i} \in S$ be a sequence such that $$\lim_{i\to \infty} J(\eta_i, x,t)=\inf_{\eta \in S}J(\eta, x, t).$$ 
 Since $\eta_{i} \in S,$ $|\eta_i ^{\prime}(\theta)|\leq M$ \emph{a.e.} and by Arzela-Ascoli, we have a subsequence (still denoted by $\eta_i$) such that $\eta_i \to \eta_0$ uniformly. The proof of the facts that $\eta_0$ satisfies $|\eta_0^{\prime}(\theta)|\leq C_0,$ and $J(\eta_0, x, t))\leq J(\gamma_0, x, t)$ are divided into three steps.
 
\noindent{\bf Step 1:}  We first prove that $\eta_0 \in S.$ To show this, consider  $(a, b) \subset \{\theta \in [0, t]:\eta_0 (\theta) \neq \gamma_0(\theta)\}.$  Then there exists a $i_0 \in \mathbb{N}$ such that $\eta_i(\theta)\neq\gamma_0(\theta),$ for $\theta \in (a, b)$ and  $i\geq i_0.$ Therefore
$|\eta_i ^{\prime} (\theta)| \leq M$ for almost every $\theta \in (a, b).$
Now for any test function $\varphi,$ we have
\begin{equation*}
\begin{aligned}
&\Big|\int_{a}^{b} \eta_i^{\prime}(\theta) \varphi (\theta) \Big| \leq M \int_{a}^{b} |\varphi(\theta)| d\theta
\implies& \Big|\int_{a}^{b} \eta_i(\theta) \varphi ^{\prime} (\theta) \Big| \leq M \int_{a}^{b} |\varphi(\theta)|d\theta\\
\implies& \Big|\int_{a}^{b} \eta_0(\theta) \varphi ^{\prime} (\theta) \Big| \leq M \int_{a}^{b} |\varphi(\theta)|d\theta
\implies& \Big|\int_{a}^{b} \eta_0 ^{\prime}(\theta) \varphi  (\theta) \Big| \leq M \int_{a}^{b} |\varphi(\theta)|d\theta.
\end{aligned}
\end{equation*}
Let $t_0 \in (a, b)$ is a Lebesgue point of $\eta_0$. Now choose $\varphi$ as a suitable \emph{Friedrichs mollifier} converging to $\delta_{t_0}$ to arrive at the result that  $|\eta_0 ^{\prime} (t_0)| \leq M.$ This proves the claim.\\
\noindent
{\bf Step 2:} Next we show that $J(\eta_0, x, t)=\inf_{\eta \in S}J(\eta, x, t).$ Note that $f^{*}$ satisfies 
$$f^{*} (x)\geq f^{*} (y)+\big(f^{*}\big)^{\prime} (y) (x-y).$$
Taking $x=\eta^{\prime}_i (\theta)$ and $y=\eta^{\prime}_0 (\theta),$ multiplying with $e^{- \beta(\theta)}$ and integrating over ${\eta_i \neq 0}$ we get 
\begin{equation*}
    \begin{aligned}
    \int_{\eta_i \neq 0}f^{*} (\eta^{\prime}_i (\theta))e^{-\beta(\theta)} d\theta &\geq \int_{\eta_i \neq 0} f^{*} (\eta^{\prime}_0 (\theta))e^{-\beta(\theta)} d\theta+ \int_{\eta_i \neq 0}\big(f^{*}\big)^{\prime} (\eta^{\prime}_0 (\theta)) (\eta^{\prime}_i (\theta)-\eta^{\prime}_0 (\theta))e^{-\beta(\theta)} d\theta.
    \end{aligned}
\end{equation*}
Again adding the $-\int_{\eta_i= 0}f(\bar{u_b}(\theta))e^{-\beta(\theta)} d\theta +W_0(\eta_i(0)) $ on both sides we get
\begin{align}\label{mimi3.13}
   & \int_{\eta_i \neq 0}f^{*} (\eta^{\prime}_i (\theta))e^{-\beta(\theta)} d\theta -\int_{\eta_i= 0}f(\bar{u_b}(\theta))e^{-\beta(\theta)} d\theta + W_0(\eta_i(0))\nonumber \\
    &\geq\int_{\eta_i \neq 0} f^{*} (\eta^{\prime}_0 (\theta))e^{-\beta(\theta)} d\theta+ \int_{\eta_i \neq 0}f^{* \prime} (\eta^{\prime}_0 (\theta)) (\eta^{\prime}_i (\theta)-\eta^{\prime}_0 (\theta))e^{-\beta(\theta)}d\theta\nonumber\\
    &-\int_{\eta_i= 0}f(\bar{u_b}(\theta))e^{-\beta(\theta)} d\theta + W_0(\eta_i(0)).
    \end{align}
As $\eta^{\prime}_{i}$ convergence weakly to $\eta^{\prime}_{0}$, applying the $\liminf$ as $i \to \infty$ in the above inequality \eqref{mimi3.13} yields

$$\inf_{\eta \in S}J(\eta, x, t)\geq \int_{\eta_0 \neq 0}f^{*} (\eta^{\prime}_0 (\theta))e^{-\beta(\theta)}d\theta -\int_{\eta_0= 0}f(\bar{u_b}(\theta))e^{-\beta(\theta)} d\theta + W_0(\eta_0(0))=J(\eta_0, x,t).$$ 

\noindent{\bf Step 3:} In this final step, we will show that $|\eta_0 ^{\prime}(\theta)| \leq C_0$ \emph{a.e.} on $[0, t],$ where $C_0$ is independent of the Lipschitz constant of $\gamma_0,$ more precisely, 
\begin{align*}
C_0=  \max\left\{\max_{\xi_1, \xi_2 \in [0,t]} f^{\prime}\left( e^{\beta(\xi_1)-\beta(\xi_2)}\lambda_f\right), \,\, \max \left(\left|f^{\prime}\left(y_0 e^{\displaystyle{\max_{\theta \in [0, t]}}\beta(\theta)}\right)\right|, \left|f^{\prime}\left(y_0 e^{\displaystyle{\min_{\theta \in [0, t]}}\beta(\theta)}\right)\right|\right)\right\},
\end{align*}
where
\begin{align*}
    \left|y_0\right| \leq \max\left\{\frac{\left(f^{\prime}\right)^{-1}\left(y/t\right)}{e^{\displaystyle{\max_{\theta\in [0, t]}} \beta(\theta)}}, \frac{\left(f^{\prime}\right)^{-1}\left(y/t\right)}{e^{\displaystyle{\min_{\theta\in [0, t]}} \beta(\theta)}}\right\}.
\end{align*}
Suppose this is not the case. Then there exists an interval $[a, b]\subset [0, t]$ where $\eta_0(\theta)\neq 0$ and a small $\delta>0$ such that 
\begin{align*}
    \left|\left\{\theta \in (a, a+\delta)\,\, \Big|\,\, \left|\eta_0^{\prime}(\theta)\right|>C_0\right\}\right|>0.
\end{align*}
Then for some $\theta_1 \in (a, b),$ there exists a {\em h-curve} $h_1$ joining $(\eta_0(a), a)$ and $(\eta_0(\theta_1), \theta_1)$ that either lies completely in $Q$ or touches the $t$-axis at some point $(0, \theta_2).$
If $h_1$ touches the boundary, noting that $h_1^{\prime}(\theta)= f^{\prime}\left(y_0 e^{\beta(\theta)}\right)$ for some $y_0$ and $f^{\prime}(y_0 e^{\beta(\theta_2)})=0,$ we find $y_0= \lambda_f e^{-\beta(\theta_2)}$ which gives $|h_1^{\prime}(\theta)|\leq C_0.$ Now the new curve $\eta_1$ defined as
\begin{align*}
    \eta_1(\theta)= \begin{cases}
    h_1(\theta) ,\,\, a \leq \theta \leq  \theta_1\\
    \eta_0 (\theta),\,\, \theta_1 \leq \theta \leq  b
    \end{cases}
\end{align*}
is an element of $S$ and satisfies $J(\eta_1, x,t) <J(\eta_0, x,t)$ which is a contradiction to the previous step that $\eta_0$ is the minimizer.

If $h_1$ completely lies in $Q,$ then we have a {\em h-curve} joining  $(0, t)$ to $(\eta_0 (a),a)$ that does not touch the boundary at any point $\theta\in [a, t]$ and completely lies in $Q.$

Now if there exists a point $0\leq \theta_3< a,$ and a {\em h-curve} $h_2$ such that it joins $(0, t)$ to $( \eta_0 (\theta_3),\theta_3)$ and touches $t$-axis at $\theta_4,$ then using the previous argument we get a contradiction. If such a {\em h-curve} $h_2$ does not exist, we can further find a point below $\theta_4$ and a {\em h-curve} that joins $(0, t)$ to the corresponding point on the curve $\eta_0$ and touches the boundary. If the above process does not give any $\theta_i$ for $i=2, 4,$ then we have 
a {\em h-curve}, let say, $h_3$ joining $(0,t)$ and $(y, 0).$   In that case $h_3\in S,$  and $|h_3^{\prime} (\theta)|\leq C_0,$ and furthermore  $J(h_3, 0,t) <J(\eta_0, 0,t)$ leading to the contradiction again.

Therefore, considering the set of Lipschitz curves,
\begin{align*}
D_{C_0}=\Big\{\eta :[0,t] \to [0, \infty)\,\,\, \textnormal{Lipschitz curve}\,\,\Big| \,\,|\eta^{\prime}(\theta)|\leq C_0\,\,\, \textnormal {for \emph{a.e}}\,\,\, \theta \in [0,t] \Big\},
\end{align*} we obtain 
$$\inf_{\eta }J(\eta, 0, t)=\inf_{D_{C_0}} J(\eta, 0, t).$$ 
Then following Step 1 to Step 2 we get the existence of a minimizer $\gamma$. Now applying the \cref{min h curve}, we conclude the proof of the theorem.
\end{proof}
\begin{remark}\label{rem 1.1}
If $\bar{u}_b$ satisfies  $f(\lambda_f e^{\beta(t)}) \leq f(\bar{u}_b(t))$ for almost every $t>0,$ then $W(x,t)$ has the following form:
\begin{equation}\label{r1.2}
\begin{aligned}
W(x, t)&= \min_{t\geq t_2\geq t_1\geq 0, y} \int_{t_2}^t e^{-\beta(\theta)} f^*(f^{\prime}(h(x,t, t_2)e^{\beta(\theta)}))d\theta-\int_{t_1}^{t_2}f(\bar{u_b}(\theta))e^{-\beta(\theta)}d\theta\\ &+\int_0^{t_1} e^{-\beta(\theta)}f^*(f^{\prime}(h(-y,t_1)e^{\beta(\theta)}))d\theta +\int_0^y u_0(\theta)d\theta.
\end{aligned}
\end{equation}
In particular if $f^{\prime}(0)=0,$ $W(x,t)$ is of the form \eqref{r1.2}.
\end{remark}
\begin{proof}[Proof of Remark $1.2$]
It is enough to verify the following inequality for any curve $\gamma$ which satisfies $\gamma(t_1)=\gamma(t_2)=0,$ the following inequality holds:
\begin{equation}
    \int_{t_1}^{t_2} f^*(\dot{\gamma}(\theta))e^{-\beta(\theta)}d\theta \geq -\int_{t_1}^{t_2}f(\bar{u_b}(\theta))e^{-\beta(\theta)}d\theta.
    \label{ineqrem-1}
\end{equation}
This forces the minimum to be held along the boundary. The above inequality \eqref{ineqrem-1} follows from the following inequality:
\begin{equation*}
\begin{aligned}
    \int_{t_1}^{t_2} f^*(\dot{\gamma}(\theta))e^{-\beta(\theta)}d\theta
    = \int_{t_1}^{t_2} \sup_{v\in \mathbb{R}}[v\dot{\gamma}(\theta)-f(v)]e^{-\beta(\theta)}d\theta
    \geq -\int_{t_1}^{t_2}f(\lambda_f e^{\beta(\theta)})e^{-\beta(\theta)}d\theta.
\end{aligned}
\end{equation*}
In the above, we have chosen $v=\lambda_f e^{\beta(\theta)}$ in the second term to arrive at the last term. This proves the \cref{rem 1.1}

In particular if $f^{\prime}(0)=0,$ the inequality $f(\lambda_f e^{\beta(t)}) \leq f(\bar{u}_b(t))$ reduces to $f(\bar{u_b}(\theta)) \geq f(0)$ and the latter holds true as $f^{\prime}$ changes sign at $0$.
\end{proof}
The following example demonstrates that the minimization can be achieved by a curve consisting of many pieces of {\em h-curves}. We consider the boundary data $u_b$ to be a step function. Note that $u_b$ is a bounded measurable function but not a function of bounded variation in any interval containing zero. Such pathological examples can also be constructed for the BV boundary datum.
\begin{example}\label{example2}
Consider the balance law
\begin{align*}
\begin{cases}
&u_t + \Big(\frac{(u-60)^2}{2}\Big)_x=\alpha(t)u,\\
&u(x,0)=0,\,\,\,u(0, t)= u_b(t).
\end{cases}
\end{align*}
We set $\alpha(t)$ and $u_b(t)$ as:
\begin{align*}
\alpha(t)=\begin{cases}
     \frac{\dot{\upgamma}(t)+60}{\Ddot{\upgamma}(t)} \, &\textnormal{if}\,\,t\in\left(0, \frac{1}{\pi}\right),\\
    0 &\textnormal{if}\,\, t\in\left(\frac{1}{\pi}, \infty \right).
\end{cases}
\end{align*}
 \begin{equation*}
     \begin{aligned}
    u_b(t)=\begin{cases}
    60\,\,\, &\textnormal{if}\,\,t\in \cup_{n=1}^{\infty}\left(\frac{1}{2n\pi}, \frac{1}{(2n-1)\pi}\right),\\
     4080 &\textnormal{if}\,\, t\in \cup_{n=1}^{\infty} \left(\frac{1}{(2n+1)\pi}, \frac{1}{2n\pi}\right),\\
    60 &\textnormal{if}\,\, t\in \left(\frac{1}{\pi}, \infty \right),
    \end{cases}
    \end{aligned}
    \end{equation*}
    where $n\in \mathbb{N}$ and  $\upgamma(\theta)=\left(\theta\sin \frac{1}{\theta}\right)^4.$
    We get
    \begin{equation*}
    \begin{aligned}
W \Big(0,\frac{1}{\pi}\Big) 	&=\inf_{\gamma\in \Gamma(0, \frac{1}{\pi})}\left\{\int_{\{\gamma(\theta)\neq 0\}}f^*(\dot{\gamma}(\theta))e^{-\beta(\theta)}d\theta-\int_{\{\gamma(\theta)= 0\}}f(\bar{u_b}(\theta))e^{-\beta(\theta)}d\theta\right\}\\
&= \sum_{n\in \mathbb{N}}\int^{\frac{1}{(2n-1)\pi}}_{\frac{1}{2n\pi}}f^*(\dot{\upgamma}(\theta))e^{-\beta(\theta)}d\theta -
\sum_{n\in \mathbb{N}} \int^{\frac{1}{2n\pi}}_{\frac{1}{(2n+1)\pi}}f(u_b (\theta)) d\theta.
\end{aligned}
\end{equation*}
    Here $f(u)= \frac{(u-60)^2}{2},$ $f^ *(u)= \frac{u^2}{2}+ 60 u.$ Indeed, for any curve $\gamma$ that joins $(0,t)$ to $(y, 0)$, we get
\begin{align}\label{exampleverification1}
&\int_{\{\gamma(\theta)\neq 0\}}f^*(\dot{\gamma}(\theta))e^{-\beta(\theta)}d\theta-\int_{\{\gamma(\theta)= 0\}}f(\bar{u_b}(\theta))e^{-\beta(\theta)}d\theta\\
=&\sum_{n\in \mathbb{N}}\int_{\{\gamma(\theta)\neq 0\}\cap\big[\frac{1}{2n\pi}, \frac{1}{(2n-1)\pi }\big]}f^*(\dot{\gamma}(\theta))e^{-\beta(\theta)}d\theta+ \sum_{n\in \mathbb{N}}
\int_{\{\gamma(\theta)\neq 0\}\cap\big[\frac{1}{(2n+1)\pi}, \frac{1}{2n\pi }\big]}f^*(\dot{\gamma}(\theta))e^{-\beta(\theta)}d\theta\nonumber\\
&-\sum_{n\in \mathbb{N}}\int_{\{\gamma(\theta)=0\}\cap\big[\frac{1}{2n\pi}, \frac{1}{(2n-1)\pi }\big]}f(\bar{u_b}(\theta))e^{-\beta(\theta)}d\theta-\sum_{n\in \mathbb{N}}
\int_{\{\gamma(\theta)= 0\}\cap\big[\frac{1}{(2n+1)\pi}, \frac{1}{2n \pi }\big]}f(\bar{u_b}(\theta))e^{-\beta(\theta)}d\theta \nonumber\\
= &\sum_{n\in \mathbb{N}}\int_{\{\gamma(\theta)\neq 0\}\cap\big[\frac{1}{2n\pi}, \frac{1}{(2n-1)\pi }\big]}f^*(\dot{\gamma}(\theta))e^{-\beta(\theta)}d\theta +\sum_{n\in \mathbb{N}}\int_{\{\gamma(\theta)=0\}\cap\big[\frac{1}{2n\pi}, \frac{1}{(2n-1)\pi }\big]}f^*(\dot{\gamma}(\theta))e^{-\beta(\theta)}d\theta \nonumber\\
&+ \sum_{n\in \mathbb{N}}
\int_{\{\gamma(\theta)\neq 0\}\cap\big[\frac{1}{(2n+1)\pi}, \frac{1}{2n\pi }\big]}f^*(\dot{\gamma}(\theta))e^{-\beta(\theta)}d\theta-\sum_{n\in \mathbb{N}}
\int_{\{\gamma(\theta)= 0\}\cap\big[\frac{1}{(2n+1)\pi}, \frac{1}{2n \pi }\big]}f(\bar{u_b}(\theta))e^{-\beta(\theta)}d\theta \nonumber
\end{align}
where we used the fact that $f^*(\dot{\gamma}(\theta))=f(\overline{u}_b(\theta))=0$ in $(1/2n\pi, 1/(2n-1)\pi).$
Now invoking \cref{min h curve}, we obtain
\begin{align}\label{exampleverification2}
    &\sum_{n\in \mathbb{N}}\int_{\{\gamma(\theta)\neq 0\}\cap\big[\frac{1}{2n\pi}, \frac{1}{(2n-1)\pi }\big]}f^*(\dot{\gamma}(\theta))e^{-\beta(\theta)}d\theta +\sum_{n\in \mathbb{N}}\int_{\{\gamma(\theta)=0\}\cap\big[\frac{1}{2n\pi}, \frac{1}{(2n-1)\pi }\big]}f^*(\dot{\gamma}(\theta))e^{-\beta(\theta)}d\theta\nonumber\\ 
    =&\sum_{n\in \mathbb{N}}\int^{\frac{1}{(2n-1)\pi}}_{\frac{1}{2n\pi}}f^*(\dot{\gamma}(\theta))e^{-\beta(\theta)}d\theta
    \geq \sum_{n\in \mathbb{N}}\int^{\frac{1}{(2n-1)\pi}}_{\frac{1}{2n\pi}}f^*(\dot{\upgamma}(\theta))e^{-\beta(\theta)}d\theta.
\end{align}
Moreover, using \cref{rem 1.1}, we deduce
\begin{equation}\label{exampleverification3}
\begin{aligned}
& \sum_{n\in \mathbb{N}}
\int_{\{\gamma(\theta)\neq 0\}\cap\big[\frac{1}{(2n+1)\pi}, \frac{1}{2n\pi }\big]}f^*(\dot{\gamma}(\theta))e^{-\beta(\theta)}d\theta-\sum_{n\in \mathbb{N}}
\int_{\{\gamma(\theta)= 0\}\cap\big[\frac{1}{(2n+1)\pi}, \frac{1}{2n \pi }\big]}f(\bar{u_b}(\theta))e^{-\beta(\theta)}d\theta\\
&\geq -\sum_{n\in \mathbb{N}}\int_{\{\gamma(\theta)\neq 0\}\cap\big[\frac{1}{(2n+1)\pi}, \frac{1}{2n\pi }\big]}f(\bar{u_b}(\theta))e^{-\beta(\theta)}d\theta -\sum_{n\in \mathbb{N}}
\int_{\{\gamma(\theta)= 0\}\cap\big[\frac{1}{(2n+1)\pi}, \frac{1}{2n \pi }\big]}f(\bar{u_b}(\theta))e^{-\beta(\theta)}d\theta\\
&= -\sum_{n\in \mathbb{N}} \int^{\frac{1}{2n\pi}}_{\frac{1}{(2n+1)\pi}}f(u_b (\theta)) d\theta.
\end{aligned}
\end{equation}
Finally, inserting \eqref{exampleverification2} and \eqref{exampleverification3} in \eqref{exampleverification1}, we get 
\begin{equation*}
\begin{aligned}
&\int_{\{\gamma(\theta)\neq 0\}}f^*(\dot{\gamma}(\theta))e^{-\beta(\theta)}d\theta-\int_{\{\gamma(\theta)= 0\}}f(\bar{u_b}(\theta))e^{-\beta(\theta)}d\theta\\ &\geq \int_{\{\upgamma(\theta)\neq 0\}}f^*(\dot{\upgamma}(\theta))e^{-\beta(\theta)}d\theta-\int_{\{\upgamma(\theta)= 0\}}f(\bar{u_b}(\theta))e^{-\beta(\theta)}d\theta.
\end{aligned}
\end{equation*}
\end{example}

\begin{lemma}[\textbf{Lipschitz continuity of value function}]\label{LIP Value}
If the boundary data $u_b$ is a function of bounded variation in $[0, \infty)$, then the value function $W(x,t)$ is locally Lipschitz continuous for all $x> 0$ and $t> 0$ and continuous up to the boundary.
\end{lemma}\begin{proof}
First, we show that $W(0, t)$ is a locally Lipschitz continuous for $t>0$. For a fixed $t_0 \in (0, \infty),$ choose $t_1, t_2 \in (t_0-\delta, t_0+\delta)$  with $0<t_1<t_2$ for a small $\delta>0.$ Note that  $W(0, t_2)\leq W(0, t_1)- \int_{t_1}^{t_2} f(\bar{u}_b (\theta))e^{-\beta(\theta)} d \theta.$
This implies
\begin{align}\label{lipw1}
    W(0, t_2)-W(0, t_1) \leq -\int_{t_1}^{t_2} f(\bar{u}_b (\theta)) e^{-\beta(\theta)}d \theta.    \end{align}
For $s\in (t_0-\delta, t_0+\delta),$ using the above characterization in \cref{5.1}, we deduce that $W(0,s)$ is achieved by a Lipschitz curve $\gamma_s$ such that $\left|\dot \gamma_s (\theta)\right|< M$ for some $M>0.$ Then
\begin{align*}
W(0, t_2)= \int_{\gamma_{t_2}(\theta) \neq 0}f^{*} \left(\dot{\gamma}_{t_2}(\theta)\right) e^{-\beta(\theta)}d\theta -\int_{\gamma_{t_2}(\theta)=0}f(\bar{u_b}(\theta)) e^{-\beta(\theta)}d\theta + W_0(\gamma_{t_2}(0)).
\end{align*}
Define a new curve $\tilde{\gamma}:[0, t_1]\to [0, \infty)$ such that $\tilde{\gamma}(\theta)=\gamma_{t_2}(\theta+t_2- t_1).$
Now we estimate
\begin{align}\label{lipw2}
W(0, t_1)-W(0, t_2) &\leq J(\tilde {\gamma}, 0,t_1)-J(\gamma_{t_2}, 0, t_2)\\
&\leq \int_{\tilde{\gamma} (\theta)\neq 0}f^{*} (\dot{\tilde{\gamma}} (\theta)) e^{-\beta(\theta)} -\int_{\tilde{\gamma} (\theta)= 0}f(\bar{u_b}(\theta)) e^{-\beta(\theta)}d\theta + W_0 (\tilde{\gamma}(0))\nonumber\\
&- \int_{\gamma_{t_2} (\theta)\neq 0}f^{*} (\dot{\tilde{\gamma}} (\theta)) e^{-\beta(\theta)} +\int_{\gamma_{t_2} (\theta)= 0}f(\bar{u_b}(\theta)) e^{-\beta(\theta)}d\theta - W_0 (\gamma_{t_2}(0))\nonumber\\
&=\int_{\gamma_{t_2}(\theta)\neq 0}f^*(\dot{\gamma}_{t_2}(\theta))\left[e^{-\beta(\theta+t_1-t_2)}-e^{-\beta(\theta)}\right]d\theta \nonumber\\
&-\int_{\gamma_{t_2}(\theta)=0}\left[f(\bar{u}_b(\theta+t_1-t_2))e^{-\beta(\theta+t_1-t_2)}-f(\bar{u}_b(\theta))e^{-\beta(\theta)}\right]d\theta \nonumber\\
&+W_0(\gamma_{t_2}(t_1-t_2))-W_0(\gamma_{t_2}(0)):=I_1+I_2+I_3\nonumber
\end{align}
Since $\dot\gamma_{t_2}$ is bounded, $I_1 \leq C_1\left|t_1-t_2\right|.$ 
Now to estimate $I_2,$ denote $g(\theta)=f(\bar{u}_b(\theta))e^{-\beta(\theta)}.$ Note that $g$ is a function of bounded variation. Therefore $g$ can be written as $g=g_1 -g_2$ where $g_1$ and $g_2$ are bounded monotone increasing functions. We estimate
\begin{align}
|I_2|\leq &\left|\int_{\gamma_{t_2}(\theta)=0}\left[f(\bar{u}_b(\theta+t_1-t_2))e^{-\beta(\theta+t_1-t_2)}-f(\bar{u}_b(\theta))e^{-\beta(\theta)}\right]d\theta\right |\nonumber\\
&\leq \int_{t_2-t_1}^{t_2}\left|f(\bar{u}_b(\theta+t_1-t_2))e^{-\beta(\theta+t_1-t_2)}-f(\bar{u}_b(\theta))e^{-\beta(\theta)}\right| d\theta \nonumber\\
&=\int_{t_2-t_1}^{t_2}\left|g(\theta+t_1-t_2)-g(\theta)\right|d\theta \nonumber\\
&=\int_{t_2-t_1}^{t_2}\left|g_1(\theta+t_1-t_2)-g_2(\theta+t_1-t_2)-g_1(\theta)+g_2(\theta)\right|d\theta \nonumber\\
&\leq\int_{t_2-t_1}^{t_2}\left|g_1(\theta+t_1-t_2)-g_1(\theta)\right|d\theta+\int^{t_2}_{t_2-t_1}\left|g_2(\theta+t_1-t_2)-g_2(\theta)\right|d\theta \nonumber\\
&=\int_{t_2-t_1}^{t_2}\left(g_1(\theta)-g_1(\theta+t_1-t_2)\right)d\theta+\int^{t_2}_{t_2-t_1}\left(g_2(\theta)-g_2(\theta+t_1-t_2)\right)d\theta\nonumber \\ 
&=\int_{t_2-t_1}^{t_2}g_1(\theta)d\theta-\int_{0}^{t_1}g_1(\theta)d\theta+\int_{t_2-t_1}^{t_2}g_2(\theta)d\theta-\int_{0}^{t_1}g_2(\theta)d\theta\nonumber\\
&=\int_{t_1}^ {t_2} \left(g_1 (\theta) +g_2 (\theta)\right) d\theta - \int_{0}^{t_2 -t_1} \left(g_1 (\theta) +g_2 (\theta)\right) d\theta\nonumber
\end{align}
Since $g_1$ and $g_2$ are bounded, there exists $C_2$ such that $|I_2|\leq C_2|t_1-t_2|$. 
Again the Lipschitz continuity of $W_0,$ we obtain $|I_3| \leq C_3|t_1-t_2|$. Therefore combining the estimates \eqref{lipw1}-\eqref{lipw2}, we have
\begin{align*}
   |W(0, t_1)-W(0, t_2)|\leq C |t_1-t_2|. 
\end{align*}
When $W(x,t)=A(x,t),$ the proof of Lipschitz continuity is available in \cite{manish}. We only provide the proof when $W(x,t)=B(x,t).$
First, we prove that $B(x,t)$ is Lipschitz continuous in $t>0.$ Similar to the above, we fix a point $t_0\in (0, \infty)$ and choose $t_1, t_2 \in (t_0-\delta, t_0+\delta)$ with $0<t_1<t_2,$
and also for  $s\in (t_0-\delta, t_0+\delta)$, there exists a {\em h-curve} $\gamma_s$ along which $B(x, t)$ achieves its minimum. As $\dot \gamma_s$ is uniformly bounded, there exists $M>0$ such that $|f^*\left(\dot\gamma_s\right)|<M$. Let $\gamma_{t_1}:[\tau, t_1]\to [0, \infty)$ is a Lipschitz continuous curve such that $\gamma_{t_1}(t_1)=x,\, \gamma_{t_1}(\tau)=0$ and
\begin{align*}
    B(x,t_1):=J(\gamma_{t_1}, x, t)=\int_{\tau}^{t_1}f^*\left(\dot{\gamma}_{t_1}(\theta)\right)e^{-\beta(\theta)}d\theta+W(0, \tau).
\end{align*}
Moreover, we define $\tilde{\gamma}:[\tau+(t_2-t_1), t_2]\to [0, \infty) $ as follows:
\begin{align*}
\tilde{\gamma}(\theta)=\gamma_{t_1}(\theta+t_1-t_2)
\end{align*}
such that $\tilde{\gamma}(t_2)=x.$ Then we have the following inequality 
\begin{align*}
    B(x,t_2)-B(x,t_1)\leq J(\tilde{\gamma}, x, t_2)-J(\gamma_{t_1}, x, t_1)
\end{align*}
which implies
\begin{align*}
    B(x,t_2)-B(x,t_1)\leq \int_{\tau}^{t_1} f^*(\dot{\gamma}_{t_1}(\theta))\left[e^{-\beta(\theta+t_2-t_1)}-e^{-\beta(\theta)}\right]d\theta + W(0, \tau+t_2-t_1)-W(0, \tau).
\end{align*}
Therefore using the Lipschitz continuity of $W(0, \tau),$ we have
\begin{align*}
    B(x,t_2)-B(x,t_1)\leq C_1 |t_1-t_2|.
\end{align*}
Similarly, to estimate $B(x,t_1)-B(x,t_2),$ we assume $B(x,t_2)=J(\gamma, x, t_2)$ and obtain
\begin{align*}
    B(x,t_1)-B(x,t_2)\leq C_2 |t_1-t_2|.
\end{align*}
Combining the above two inequalities, we complete the proof.

Next we shall show that $B(x,t)$ is locally Lipschitz for $x>0.$ Fix $x_0\in (0, \infty)$ and let $x_1, x_2 \in (x_0-\delta, x_0+\delta)$ with $x_1<x_2.$ Also, for any $s\in (x_0-\delta, x_0+\delta)$ there exists a {\em h-curve} $\eta_s$ along which $W(x,t)$ achieved its minimum.
Since $\dot \eta_s$ is uniformly bounded, there exists $M>0$ such that for $r\in(0,1),$ $|(f^*)^{\prime}(r \dot{\eta}_s)|<M.$ Let $\gamma_{x_2}:[\tau,t]\to [0, \infty)$ such that $\gamma_{x_2}(t)=x_2$  and 
\begin{align*}
B(x_2, t)=\int_{\tau}^t f^*(\dot{\gamma}_{x_2}(\theta))e^{-\beta(\theta)}d\theta +W(0, \tau).
\end{align*}
Define the new curve $\tilde{\gamma}:[\tau,t]\to [0, \infty) $ as: $\tilde{\gamma}(\theta)=x_1\gamma_{x_2}(\theta)/x_2.$
Then $\tilde{\gamma}(\tau)=0$ and $\tilde{\gamma}(t)=x_1.$ Now we estimate
\begin{align*}
B(x_1,t)-B(x_2,t)&\leq \int_{\tau}^t \left(f^*(\dot{\tilde{\gamma}}(\theta))-f^*(\dot{{\gamma}_{x_2}}(\theta))\right) e^{-\beta(\theta)}d\theta
=\int_{\tau}^t f^{*\prime}(\xi_{\theta})\left(x_1/x_2-1\right)\dot{\gamma}_{x_2}(\theta) e^{-\beta(\theta)}d\theta
\end{align*}
and since $\dot{\gamma}_{x_2}$ is bounded, we obtain
\begin{align}\label{Lip B_x}
B(x_1,t)-B(x_2,t) \leq C e^{\max_{\theta\in [\tau, t]}\left\{-\beta(\theta)\right\}}\left(x_1-x_2\right)\leq C_1|x_1-x_2|.
\end{align}
To estimate $B(x_2,t)-B(x_1,t),$ similarly we assume that $\gamma_{x_1}:[\tau, t]\to [0,\infty)$ is a {\em h-curve} such that $\gamma_{x_1}(t)=x_1$ and 
\begin{align*}
    B(x_1,t)=\int_{\tau}^t  e^{-\beta(\theta)}f^*(\dot{\gamma}_{x_1}(\theta))d\theta+W(0,\tau).
\end{align*}
Now define a curve
\begin{align*}
    \tilde{\gamma}(\theta)=\begin{cases}
    \gamma_{x_1}(\theta)+x_2-x_1 \,\,\,\, &\text{if}\,\,\, \theta \in [\tau, t],\\
    \theta-\tau+x_2-x_1    \,\,\,\, &\text{if}\,\,\, \theta \in [\tau-(x_2-x_1), \tau],
    \end{cases}
\end{align*}
and the following estimate holds:
\begin{align}\label{lip B_x}
    B(x_2,t)-B(x_1,t)\leq &\int_{\tau-(x_2-x_1)}^t e^{-\beta(\theta)} f^*(\dot{\tilde{\gamma}}(\theta))d\theta+ W(0,\tau-(x_2-x_1))\nonumber\\
    &-\int_{\tau}^t e^{-\beta(\theta)}f^*(\dot{\gamma}_{x_1}(\theta))d\theta-W(0,\tau)\nonumber\\
    &\leq \Big|\int_{\tau-(x_2-x_1)}^\tau e^{-\beta(\theta)}f^*(1) d\theta \Big|+|W(0,\tau-(x_2-x_1))-W(0,\tau)|\nonumber\\
    &\leq C_2|x_2-x_1|
\end{align}
where in the last inequality we used the Lipschitz continuity of $W(0,t).$
Combining the inequalities \eqref{Lip B_x}-\eqref{lip B_x} we get $B(\cdot, t)$ is Lipschitz continuous. 

Finally, one can follow a variant of the arguments as in Step $1$ to Step $3$ of \cref{5.1} to show that the value function $W(x,t)$ is continuous at the boundary $x=0.$
This completes the proof.
\end{proof}
\begin{lemma}[\textbf{Existence of minimizer}]\label{min exists} Let $A(y, x, t)$ and $B(\tau, x, t)$ are the initial and boundary functional, respectively given by \eqref{initial functional}-\eqref{boundary functional}. Then there exists $\bar{y} \in H_A(x,t)$ and $\bar{\tau}\in H_B(x,t)$ such that $A(x,t)=A(\bar{y}, x, t)$ and $B(x,t)=B(\bar{\tau}, x, t).$
\end{lemma}
\begin{proof}
Consider the sequence $y_n \in H_{A}(x,t)$ such that 
\begin{align*}
\int_0^t e^{-\beta(\theta)} f^*(f^{\prime}(h(x-{y_n},t)e^{\beta(\theta)}))d\theta \to A(x,t)\,\,\,\, \text{as}\,\,\, n \to \infty.
\end{align*}
From \eqref{y_0 bounded}, we note that $\{y_n\}$ is a bounded sequence. Hence there exists convergent subsequence $y_{n_k}$  of $y_n$ such that $y_{n_k}\to \bar{y}.$ By continuity of $h(\xi, x, t)$, we obtain $h(x-y_{n_k}, t) \to h(x-\bar{y}, t).$ Thus by dominated convergence, we have
\begin{align*}
  \int_0^t e^{-\beta(\theta)} f^*(f^{\prime}(h(x-y_{n_k},t)e^{\beta(\theta)}))d\theta +U_0(y_{n_k})
  \to
  \int_0^t e^{-\beta(\theta)} f^*(f^{\prime}(h(x-\bar{y},t)e^{\beta(\theta)}))d\theta +U_0(\bar{y})
\end{align*}
Since $W(0, \bar{\tau})$ is Lipschitz continuous, similar arguments can be used for the functional $B(\tau, x,t).$
\end{proof}
Next lemma shows the non-intersecting property for $W(x,t)=\min\{A(x,t), B(x,t)\}.$ When $W(x,t)=A(x,t),$ the non-intersecting property is already proved in \cite{manish}. We only consider the case when $W(x,t)=B(x,t).$
\begin{lemma}[\textbf{Non-intersecting property}]\label{NIP}
Given a fixed point $(x,t)$ with $x\geq0, t>0,$ let $W(x,t)=B(x,t)=B(\tau, x,t)$ for some point $\tau.$ Let 
$$B(\tau, x, t)=\int_{\tau}^t f^*(\dot{\gamma}(\theta))e^{-\beta(\theta)}d\theta +W(0,\tau),$$
where $\gamma$ is a $h$-curve that joins $(x, t)$ to $(0, \tau).$ Then for any point $(\bar{x}, \bar{t})$ on $\gamma,$ we have
\begin{align*}
    B(\bar{x},\bar{t})< J(\tilde{\gamma}, \bar{x}, \bar{t})
\end{align*}
where $\tilde{\gamma}\in Q$ is different from $\gamma$ in the interval $[\tau, \bar{t}]$. Consequently,  the minimizers corresponding to the points $(x_1, t)$ and $(x_2, t)$ where $x_1\neq x_2, $ will not intersect.
\end{lemma}
\begin{proof}
From the definitions of $W(x,t)$ and the boundary functional $B(\tau, x,t),$ we have 
\begin{align*}
    W(x,t)=\int_{\tau}^t f^*(\dot{\gamma}(\theta))e^{-\beta(\theta)}d\theta +W(0,\tau)
\end{align*}
where $\gamma$ is a {\em h-curve} and $\tau=\max\left\{\theta\,\Big|\,{\gamma}(\theta)=0 \right\}.$
Suppose at the point $(\bar{x}, \bar{t})$ on $\gamma,$ there exists a {\em h-curve}  $\tilde{\gamma}$ different from $\gamma$ and $\tau^{\prime}=\max\left\{\theta\,\Big|\,\tilde{\gamma}(\theta)=0\right\}$ such that 
\begin{align*}
W(\bar{x},\bar{t})=\int_{\tau^{\prime}}^{\bar{t}} f^*(\dot{\tilde{\gamma}}(\theta))e^{-\beta(\theta)}d\theta +W(0,\tau^{\prime}).
\end{align*}
Since $\gamma$ is the minimizer at $(x,t),$ we have
\begin{align*}
    W(x,t)=\int_{\tau}^t f^*(\dot{\gamma}(\theta))e^{-\beta(\theta)}d\theta +W(0,\tau) &=\int_{\tau}^{\bar{t}} f^*(\dot{\gamma}(\theta))e^{-\beta(\theta)}d\theta +W(0,\tau) +\int_{\bar{t}}^t f^*(\dot{\gamma}(\theta))e^{-\beta(\theta)}d\theta\\
   &\geq W(\bar{x}, \bar{t})+\int_{\bar{t}}^t f^*(\dot{\gamma}(\theta))e^{-\beta(\theta)}d\theta\\
   &=\int_{\tau^{\prime}}^{\bar{t}} f^*(\dot{\tilde{\gamma}}(\theta))e^{-\beta(\theta)}d\theta +W(0,\tau^{\prime})+\int_{\bar{t}}^t f^*(\dot{\gamma}(\theta))e^{-\beta(\theta)}d\theta.
\end{align*}
Then defining the following curve,
\begin{equation*}
\begin{aligned}
    \Gamma(\theta)=\begin{cases}
    \gamma(\theta) \,\,\,\,&\textnormal{if}\,\,\, \theta \in [\bar{t},\,\, t],\\
    \tilde{\gamma}(\theta)\,\,\,\,&\textnormal{if}\,\,\, \theta \in [\tau^{\prime}, \bar{t}],
    \end{cases}
    \end{aligned}
\end{equation*}
we get
\begin{align*}
W(x,t)\geq \int_{\tau^{\prime}}^{t} f^*(\dot{\Gamma}(\theta))e^{-\beta(\theta)}d\theta +W(0,\tau^{\prime}).
\end{align*}

Now we define $\tau^{\prime\prime}$ as follows:
\begin{align*}
    \tau^{\prime\prime}=\inf_{\tau^{\prime}\leq \theta \leq t}\left\{\theta\,\,\, \big|\,\,\, \textnormal{there exists a {\em h-curve }$\Gamma_\theta $ in $Q$ joining}\,\, (x,t)\,\,\textnormal{to}\,\, (\Gamma(\theta), \theta) \right\}.
\end{align*}
From the construction as in \cref{bdf}, we have that $\tau^{\prime\prime}< \bar{t}$  and the {\em h-curve}  $\Gamma_ {\tau^{\prime\prime}}$ joining $(x, t)$ to $(\Gamma(\tau^{\prime\prime}), \tau^{\prime\prime})$ touches the boundary, i.e., the $t$-axis. 
Next we define a curve $\Tilde{\Gamma}$ by
\begin{align*}
\Tilde{\Gamma}(\theta)=\begin{cases}
\Gamma_{\tau^{\prime \prime}}(\theta) \,\,\,\,&\textnormal{if}\,\,\, \theta \in [\tau^{\prime\prime},\,\, t]\\
\Gamma(\theta)\,\,\,\,&\textnormal{if}\,\,\, \theta \in [\tau^{\prime}, \tau^{\prime\prime}].
\end{cases}
\end{align*}
Since $\tau^{\prime\prime}< \bar{t},$ the \cref{min h curve} gives
 \[W(x,t)=\int_{\tau^{\prime}}^{t} f^*(\dot{\Gamma}(\theta))e^{-\beta(\theta)}d\theta +W(0,\tau^{\prime})>\int_{\tau^{\prime}}^{t} f^*(\dot{\Tilde{\Gamma}}(\theta))e^{-\beta(\theta)}d\theta +W(0,\tau^{\prime}),\]
 which is a contradiction. This completes the proof of non-intersecting property.
 \end{proof}
Next, we collect some properties of $y_*, y^*, \tau_*, \tau^*$ in the following lemma.
\begin{lemma}\label{lnew}
With the definitions \eqref{definition A}-\eqref{definition B}, we have the following:
\begin{enumerate}
\item $\tau_{*}(x,t)$ and $\tau^{*}(x,t)$ are, for fixed $x$, monotonically increasing in $t$ and for fixed $t$ monotonically decreasing in $x$. Moreover we have for $t_1<t_2$ that $\tau^{*}(x,t_1)\leq\tau_{*}(x,t_2)$ and for $x_1<x_2$ that $\tau_{*}(x_1,t)\geq\tau^{*}(x_2,t)$.
\item $y_{*}(x,t)$ and $y^{*}(x,t)$ are, for fixed $t$ monotonically increasing in $x$ and for $x_1<x_2$ we have $y^{*}(x_1,t)\leq y_{*}(x_2,t)$.
\item $y_{*}(x,t)$ is lower semicontinuous and $y^{*}(x,t)$ is upper semicontinuous.
\item $\tau_{*}(x,t)$ is lower semicontinuous and $\tau^{*}(x,t)$ is upper semicontinuous.
\end{enumerate}
\end{lemma}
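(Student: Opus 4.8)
The plan is to derive the monotonicity statements (1)--(2) from the non-intersecting property (Lemma~\ref{NIP}) together with its initial-functional analogue in \cite{manish}, and the semicontinuity statements (3)--(4) from the uniform slope bound \eqref{estimate h}, the stability of minimisers established in Step~2 of Theorem~\ref{5.1}, and the local Lipschitz continuity of $W$. In each of (1)--(2) it is enough to prove one \emph{crossing inequality}; the monotonicity in each variable is then forced by the trivial bounds $\tau_*\le\tau^*$ and $y_*\le y^*$. Throughout, $\tau_*,\tau^*$ are compared at points carrying a boundary-type minimiser ($B(x,t)\le A(x,t)$) and $y_*,y^*$ at points carrying an initial-type minimiser.

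The two \emph{horizontal} inequalities are clean and come first. Fix $t$, $x_1<x_2$, and let $\gamma_1$ (detachment time $\tau_*(x_1,t)$) and $\gamma_2$ (detachment time $\tau^*(x_2,t)$) be the extreme boundary minimisers at $(x_1,t)$ and $(x_2,t)$. If $\tau_*(x_1,t)<\tau^*(x_2,t)$, then on the overlap $[\tau^*(x_2,t),t]$ one has $\gamma_1>\gamma_2=0$ at the left endpoint (since $\gamma_1$ has already detached and a single-piece $h$-curve stays positive after its last detachment) and $\gamma_1(t)=x_1<x_2=\gamma_2(t)$ at the right endpoint; hence $\gamma_1-\gamma_2$ changes sign and the two minimisers meet at an interior point. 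Because a restriction of a minimiser is again a minimiser, the strict inequality in Lemma~\ref{NIP} forces $\gamma_1$ and $\gamma_2$ to coincide all the way down to the boundary, contradicting the assumed strict ordering of detachment times. This gives $\tau_*(x_1,t)\ge\tau^*(x_2,t)$, and the identical argument with the initial-functional non-intersecting property of \cite{manish} gives $y^*(x_1,t)\le y_*(x_2,t)$; the monotonicity of all four quantities in $x$, and thus the whole of (2), follows immediately.

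The genuinely delicate assertion is the monotonicity of $\tau_*,\tau^*$ in $t$, i.e. the \emph{vertical} inequality $\tau^*(x,t_1)\le\tau_*(x,t_2)$ for $t_1<t_2$, and this is the main obstacle of the lemma. The naive endpoint comparison may fail: under the source term $f'(y_0e^{\beta(\theta)})$ can change sign, so the higher minimiser $\gamma_2$ need not be monotone in its abscissa and may \emph{overshoot} the value $x$ before returning to it at time $t_2$ (exactly the oscillatory behaviour of $h$-curves displayed in Examples~\ref{example1}--\ref{example2}), in which case $\gamma_1$ and $\gamma_2$ need not cross, and the horizontal inequality applied at the point $(\gamma_2(t_1),t_1)$ only yields the one-sided bound $\tau_*(x,t_1)\ge\tau_*(x,t_2)$, which controls $\tau_*$ from below but not $\tau^*$ from above. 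Closing the overshoot case therefore seems to require the finer structure of the minimisers---that after its last detachment a minimiser is a single $h$-curve, together with the strict-convexity comparison of Remark~\ref{rem 1} used to exclude the overshoot loops---and it is here that the bulk of the work and the genuinely new ideas for the IBVP are concentrated.

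Finally, for (3)--(4) I would argue by sequential semicontinuity. Given $(x_n,t_n)\to(x,t)$, the bound \eqref{estimate h} keeps the slopes of all competing $h$-curves uniformly bounded, so the extreme points lie in a fixed compact set; along a subsequence realising $\limsup\tau^*(x_n,t_n)=:L$ the corresponding minimisers converge uniformly with weakly convergent derivatives, and the $\liminf$ argument of Step~2 of Theorem~\ref{5.1}, combined with the continuity of $W$, shows that the limit curve $\gamma$ is a minimiser at $(x,t)$. Since $\gamma(L)=\lim\gamma_n(\tau^*(x_n,t_n))=0$, the time $L$ is a boundary contact of $\gamma$ and therefore does not exceed its last detachment time, whence $L\le\tau^*(x,t)$ and $\tau^*$ is upper semicontinuous; the symmetric choice of minimal limit curves gives lower semicontinuity of $\tau_*$, and \cite{manish} furnishes the corresponding statements for $y_*,y^*$ in (3). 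The point needing care here---once more a manifestation of the possibly non-monotone, multiply-touching $h$-curves---is that for $\tau_*$ one must prevent the limit curve from acquiring a boundary contact \emph{later} than $L$; this is ruled out by the strict-convexity comparison of Remark~\ref{rem 1}, which forbids replacing a genuine boundary sojourn by a strictly cheaper interior bump.
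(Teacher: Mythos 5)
Your reduction of the monotonicity statements to two crossing inequalities is correct, and your proof of the horizontal inequality $\tau_{*}(x_1,t)\geq\tau^{*}(x_2,t)$ (and of part (2)) via Lemma \ref{NIP} is sound; this is exactly the mechanism the paper intends, since its entire proof is the single sentence that the properties ``follow from the non-intersecting property.'' Your compactness argument for (3)--(4) also works cleanly where it is clean: for $y_{*},y^{*}$ the foot $\gamma(0)$ passes to the uniform limit of minimisers, and for the upper semicontinuity of $\tau^{*}$ a zero of the limit curve at $L=\limsup\tau^{*}(x_n,t_n)$ immediately forces $\tau^{*}(x,t)\geq L$.

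However, the proposal has a genuine, self-acknowledged gap: the vertical inequality $\tau^{*}(x,t_1)\leq\tau_{*}(x,t_2)$ for $t_1<t_2$ is never proved. You correctly isolate the problematic configuration. Writing $\gamma_2$ for the minimiser at $(x,t_2)$ with last detachment $\tau_{*}(x,t_2)$, the case $\gamma_2(t_1)<x$ is killed by a crossing and Lemma \ref{NIP}, and the case $\gamma_2(t_1)=x$ is also closed at once (then $(x,t_1)$ is an interior point of $\gamma_2$, so the strictness in Lemma \ref{NIP} makes the restriction of $\gamma_2$ the unique minimiser at $(x,t_1)$, giving equality of the detachment times); but in the overshoot case $\gamma_2(t_1)>x$ no intersection is forced, and excluding it requires a cost comparison, not non-intersection. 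Saying this ``seems to require'' the finer structure of minimisers and that ``the bulk of the work is concentrated'' there is a description of the difficulty, not a proof; as written, part (1) is unproven. The same incompleteness recurs in (4): for the lower semicontinuity of $\tau_{*}$ you note that the limit of minimisers may acquire a boundary contact later than $\ell=\liminf\tau_{*}(x_n,t_n)$, and the appeal to Remark \ref{rem 1} does not close this, since that remark compares a Lipschitz curve with the $h$-curve sharing its endpoints and says nothing about boundary sojourns of limit curves. To be fair, the paper's one-line proof addresses none of this either, so the obstruction you found is real and is glossed over by the authors; but in your write-up the time-monotonicity in (1) and the lower semicontinuity of $\tau_{*}$ in (4) must both be counted as open.
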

\begin{proof}
Proof of these properties follows from the non-intersecting property.
\end{proof}
\subsection{Proof of \cref{thm explicit formula}} The proof of \cref{thm explicit formula} is divided into two parts. In the first part, we derive the explicit representation of the solution and in the second part, we show that the explicit formula satisfies the boundary condition \eqref{boundary condition}.
 \subsubsection{Derivation of the explicit formula}\label{subsection 1}
 First of all, we observe that in the case $B(x,t)\leq A(x,t),$ i.e., when the minimum is achieved by the boundary functional, by virtue of the \cref{DPP} (dynamic programming principle) one can find a time level $t=\tau$ such that the minimum is achieved by the initial functional $A(y(x,t, \tau), x,t).$ Keeping this idea in mind, we will present a proof that essentially works for both of the cases whether $A(x,t) \leq B(x,t)$ or $B(x,t) \leq A(x,t)$ for any fixed $(x,t)\in Q.$
 
\noindent\textbf{Step 1:} 
Let $(x, t)$ be a point of continuity for $y_*, \tau_*, y^*$ and $\tau^*.$
Using the dynamics programming principle (\cref{DPP}), there exist a level $\tau>0$ and a neighbourhood $N_A$ of $x$ and $N_B$ of $t$ such that for $(y,s)\in N_A\times N_B,$ $W(y,s, \tau)$ can be expressed as follows:
$$W(y, s, \tau)= \min_ {z\in H_A(y,s, \tau) }\Big\{ W(z, \tau)+ \int_{\tau}^s f^*\left(f^{\prime}\left(h\left(y-z,s,\tau\right)e^{\beta(\theta)}\right)\right)e^{-\beta(\theta)}d\theta\Big\}.$$
Define $$ W(z, y, s, \tau)=  W(z, \tau)+ \int_{\tau}^s f^*\left(f^{\prime}\left(h\left(y-z,s,\tau\right)e^{\beta(\theta)}\right)\right)e^{-\beta(\theta)}d\theta.$$
Let $x_1<x<x_2$ and $x_1, x_2 \in N_A,$ now we calculate
\begin{align*}
& W(x_1,t, \tau)-W(x_2,t, \tau)\\
&= W(z(x_1, t, \tau), x_1, t, \tau)-  W(z(x_2,t,\tau), x_2, t, \tau)\\
&= W(z(x_1, t, \tau), x_1, t, \tau)- W(z(x_1, t, \tau), x_2, t, \tau) + W( z(x_1, t, \tau), x_2, t, \tau)- W(z(x_2,t,\tau), x_2, t, \tau).
\end{align*}
As the second term is non-negative, we obtain
\begin{align} \label{ww4.11}
&W(x_1,t, \tau)-W(x_2,t, \tau)\nonumber\\
&\geq W(z(x_1, t, \tau), x_1, t, \tau)- W(z(x_1, t, \tau), x_2, t, \tau)\nonumber\\
&\geq \int_{\tau}^t \left[f^*\left(f^{\prime}\left(h(x_1-z(x_1, t, \tau),t,\tau)e^{\beta(\theta)}\right)\right)-f^*\left(f^{\prime}\left(h(x_2-z(x_1, t, \tau),t,\tau)e^{\beta(\theta)}\right)\right)\right]e^{-\beta(\theta)}d\theta.
\end{align}
Similarly, we have  
\begin{align*} 
&W(x_1,t, \tau)-W(x_2,t, \tau)\\
&= W(z(x_1, t, \tau), x_1, t, \tau)-  W(z(x_2,t,\tau), x_2, t, \tau)\\
&= W(z(x_1, t, \tau), x_1, t, \tau)- W(z(x_2, t, \tau), x_1, t, \tau) + W( z(x_2, t, \tau), x_1, t, \tau)- W(z(x_2,t,\tau), x_2,t, \tau).
\end{align*}
As the first term is non-positive, we get
\begin{align}\label{4.13}
&W(x_1,t, \tau)-W(x_2,t, \tau)\nonumber\\
&\leq W( z(x_2, t, \tau), x_1, t, \tau)- W(z(x_2, t, \tau), x_2, t, \tau)\nonumber\\
&\leq \int_{\tau}^t \left[f^*\left(f^{\prime}\left(h(x_1-z(x_2, t, \tau),t,\tau)\right)e^{\beta(\theta)}\right)-f^*\left(f^{\prime}\left(h(x_2-z(x_2, t, \tau),t,\tau)e^{\beta(\theta)}\right)\right)\right]e^{-\beta(\theta)}d\theta.
\end{align}
\noindent\textbf{Step 2:} In the second step we simplify both the expressions \eqref{ww4.11} and\eqref{4.13}. Let us calculate
\begin{align*}
&\frac{\partial}{\partial \xi}\left[e^{-\beta(\theta)}f^*\left(f^{\prime}\left(h(x-\xi,t, \tau)e^{\beta(\theta)}\right)\right)\right]\\
&=h(x-\xi, t, \tau)\frac{\partial}{\partial \xi} f^{\prime}\big(h(x-\xi,t, \tau)e^{\beta(\theta)}.
\end{align*}
Integrating the above expression from $x$ to $z$ with respect to the variable $\xi$,
\begin{align*}
e^{-\beta(\theta)}f^*\left(f^{\prime}\left(h(x-\xi,t, \tau)e^{\beta(\theta)}\right)\right)-e^{-\beta(\theta)}f^*\left(f^{\prime}\left(h(0,t, \tau)e^{\beta(\theta)}\right)\right)\\
=\int_{x}^{z} h(x-\xi, t, \tau)\frac{\partial}{\partial \xi} f^{\prime}\left(h(x-\xi,t, \tau)e^{\beta(\theta)}\right)d\xi.
\end{align*}
 Again integrating from $\tau$ to $t$ with respect to the variable $\theta$, we find
\begin{align}\label{equation2.4}
&\int_{\tau}^t e^{-\beta(\theta)}f^*\left(f^{\prime}\left(h(x-\xi,t, \tau)e^{\beta(\theta)}\right)\right)d\theta\nonumber\\
&=\int_{\tau}^t \int_{x}^z h(x-\xi, t, \tau)\frac{\partial}{\partial \xi} f^{\prime}\left(h(x-\xi,t, \tau)e^{\beta(\theta)}\right)d\xi d\theta+\int_{\tau}^t e^{-\beta(\theta)}f^*\left(f^{\prime}\left(h(0,t,\tau)e^{\beta(\theta)}\right)\right)d\theta\nonumber\\
&=\int_x^z \int_{\tau}^t  h(x-\xi, t, \tau) \frac{\partial}{\partial \xi} f^{\prime}\left(h(x-\xi,t, \tau)e^{\beta(\theta)}\right)d\theta d\xi+\int_{\tau}^t e^{-\beta(\theta)}f^*\left(f^{\prime}\left(h(0,t, \tau)e^{\beta(\theta)}\right)\right)d\theta\nonumber\\
&=-\int_x^z h(x-\xi,t, \tau)d\xi -\int_{\tau}^t e^{-\beta(\theta)}f\left(h(0, t, \tau)e^{\beta(\theta)}\right)d\theta,
\end{align} 
where we used the identity
$$\int_{\tau}^t e^{-\beta(\theta)}f^*\left(f^{\prime}\left(h(0,t, \tau)e^{\beta(\theta)}\right)\right)d\theta= -\int_{\tau}^t e^{-\beta(\theta)}f\left(h(0, t, \tau)e^{\beta(\theta)}\right)d\theta,$$
which follows by using the identity $f^*(f^{\prime}(p)= p f^{\prime}(p)-f(p)$ and the \cref{defn h curve}. Dividing by $x_1-x_2$ and simplifying the inequalities \eqref{4.13} and \eqref{ww4.11} using \eqref{equation2.4} , we obtain
\begin{align*}
    \frac{1}{x_1-x_2}\int_{x_2-z(x_1,t, \tau)}^{x_1-z(x_1, t, \tau)}h(p, t, \tau)dp \leq \frac{W(x_1,t, \tau)-W(x_2,t,\tau)}{x_1-x_2}\leq \frac{1}{x_1-x_2}\int_{x_2-z(x_2,t, \tau)}^{x_1-z(x_2, t, \tau)}h(p, t, \tau)dp.
\end{align*}
Then passing to the limit as $x_1, x_2 \to x$ and using the properties $z(\cdot,t, \tau)$ from the \cref{lnew},  we conclude
\[W_x=h(x-z(x,t, \tau), t, \tau).\]

\noindent\textbf{Step 3:} 
Let $t_1<t<t_2$ and $t_1, t_2 \in N_B.$ Now we calculate
\begin{align*}
&W(x,t_1, \tau)-W(x_,t_2, \tau)\\
&= W(z(x, t_1, \tau), x, t_1, \tau)-  W(z(x,t_2,\tau), x, t_2, \tau)\\
&= W(z(x, t_1, \tau), x, t_1, \tau)- W(z(x, t_2, \tau), x, t_1, \tau) + W( z(x, t_2, \tau), x,  t_1, \tau)- W( z(x,t_2,\tau), x, t_2, \tau).
\end{align*}
As the first term is not positive, we get
\begin{align} \label{4.11}
W(x,t_1, \tau)-W(x,t_2, \tau)&\leq W( z(x, t_2, \tau), x, t_1, \tau)-W(z(x,t_2,\tau), x, t_2, \tau)\nonumber\\
&=\int_{\tau}^{t_1} f^*\left(f^{\prime}\left(h(x-z(x, t_2, \tau),t_1,\tau)e^{\beta(\theta)}\right)\right)e^{-\beta(\theta)}d\theta\nonumber\\ 
&- \int_{\tau}^{t_2}f^*\left(f^{\prime}\left(h(x_2-y(x, t_2, \tau),t_2,\tau)e^{\beta(\theta)}\right)\right)e^{-\beta(\theta)}d\theta.
\end{align}
  Dividing by $t_1-t_2$ and passing to the limit as $t_1, t_2 \to t$ in \eqref{4.11}  we get that the right-hand side of the above inequality is the derivative of 
  $$\int_{\tau}^{r} f^*\left(f^{\prime}\left(h(x-z(x, t_2, \tau),r,\tau)e^{\beta(\theta)}\right)\right)e^{-\beta(\theta)}d\theta$$ 
  with respect to $r$ evaluated at the point $t.$ Now we calculate
\begin{align}\label{derivative in t}
  & \frac{d}{dr}\left[\int_{\tau}^{r} f^*\left(f^{\prime}\left(h(x-z(x, t_2, \tau),r,\tau)e^{\beta(\theta)}\right)\right)e^{-\beta(\theta)}d\theta\right]\nonumber\\
  &=\frac{d}{dr} \left[h(x-z(x, t_2,\tau),r,\tau) (x-z(x, t_2,\tau))- \int_{\tau}^{r}  f\left(h(x-z(x, t_2, \tau),r,\tau))e^{\beta(\theta)}\right)e^{-\beta(\theta)}d\theta \right]\nonumber\\
  &=h_r (x-z(x, t_2,\tau),r,\tau) (x-z(x, t_2,\tau)) - f\left(h(x-z(x, t_2, \tau),r,\tau)e^{\beta(r)}\right)e^{-\beta(r)}\nonumber\\
  & - \int_{\tau}^{r}  f\left(h(x-z(x, t_2, \tau),r,\tau))e^{\beta(\theta)}\right) h_r \left(x-z(x, t_2,\tau),r,\tau\right)d\theta.
\end{align}
Therefore from \eqref{4.11} we get
\begin{equation*}
\limsup_{t_1, t_2 \to t}\frac{W(x,t_1, \tau)-W(x_,t_2,\tau)}{t_1 -t_2} \leq - f\Big(h(x-z(x, t, \tau),t,\tau))e^{\beta(t)}\Big)e^{-\beta(t)}.  
\end{equation*}
Similar calculation yields 
\begin{equation*}
\liminf _{t_1, t_2 \to t}\frac{W(x,t_1,\tau)-W(x_,t_2,\tau)}{t_1 -t_2} \geq - f\Big(h(x-z(x, t, \tau),t,\tau))e^{\beta(t)}\Big)e^{-\beta(t)}.  
\end{equation*}
The above two inequalities show that the derivative of $W$ with respect to $t$ exists and 
$$W_t=- f\Big(h(x-z(x, t, \tau),t,\tau))e^{\beta(t)}\Big)e^{-\beta(t)}. $$

\subsubsection{Verification of boundary condition}In this section we verify the boundary condition in the sense of \eqref{boundary condition}. Based upon the characterization of $W(0, t)$ in \cref{5.1}, we classify the boundary points into three types:
 \begin{definition}[\textbf{Classification of boundary points}]\label{classification} Let $t>0$ and $(0,t)$ be a point on the boundary $x=0.$ 
\begin{enumerate}
\item [1.] The point $(0, t)$ is said to be a boundary point of {\it Type-I} if 
$$W(0, t)= -\int_{\tau}^t f((\bar{u}_b(\theta))e^{-\beta(\theta)} d\theta + W(0, \tau)$$
for some $\tau<t.$
\item [2.] The point $(0, t)$ is said to be a boundary point of {\it Type-II} if 
$$W(0, t)= -\int_{\tau}^t f^{*}\left(f^{\prime}\left(h(0, t, \tau)e^{\beta(\theta)}\right)\right)e^{-\beta(\theta)} d\theta + W(0, \tau)$$
for some $\tau<t.$
\item [3.] The point $(0, t)$ is said to be a boundary point of {\it Type-III} if there exists a sequence of points $\{t_n\}$ satisfying $t_n < t_{n+1}$ and converging to $t$ such that
\begin{equation*}
    \begin{aligned}
    W(0, t)= \sum_{n\in \mathbb{N}}\int_{t_{2n-1}}^{t_{2n}}
f^*\left(f^{\prime}\left(h(0, t_{2n}, t_{2n-1})e^{\beta(\theta)}\right)\right)e^{-\beta(\theta)}d\theta 
-\sum_{n\in \mathbb{N}} \int_{t_{2n}}^{t_{2n+1}}f(\bar{u}_b (\theta)) d\theta + W(0, t_1).
    \end{aligned}
\end{equation*}
\end{enumerate}
\end{definition}

Now, we prove the weak form of boundary condition in the sense of Bardos-le Roux- N\'{e}d\'{e}lec.   
\begin{lemma}\label{BDC lemma}
The explicit solution $u$ to \eqref{equation 1}-\eqref{IBD} given in \cref{thm explicit formula}
satisfies the initial condition and a weak form of boundary condition in the sense of \eqref{boundary condition}.
\end{lemma}
\begin{proof}
To verify the initial condition, note that for $x>0,$ there exists an $\kappa>0$ such that for $t<\kappa,$  we have $A(x, t)<B(x,t).$ In that case $u(x,t)$ satisfies the initial condition (see \cite{manish}). Here, we prove the boundary condition. The proof of the boundary condition is divided into several cases depending on the classification of boundary points as well as the relation between $A(x,t)$ and $B(x,t)$ at the boundary. Furthermore, since $\tau_*(x,t)\not \to t$ as $x\to 0+$ always, one must consider the separate cases depending on this fact too. 

\noindent{\bf Case 1.} When $A(0, t) \leq B(0, t),$ first we assume that the {\em h-curve} $X(\theta)$ joining $(0, t)$  to $(y^{*}(0,t), 0)$ does not touch $t$-axis in the interval $(0, t)$.
We note that $u(0+, t)= e^{\beta(t)}h(-y^{*}(0,t), t)$ and $$X^{\prime}(\theta)=f^{\prime}\left(h(-y^{*}(0,t), t)e^{\beta(\theta)}\right).$$ 
Since $X(\theta)\geq 0$ and $X(t)=0,$ $X^{\prime}(t)\leq 0.$ This implies $f^{\prime}(u(0+, t))\leq 0.$ Note that
\begin{equation*}
\begin{aligned}
\min_{\tau \in H_B} B(\tau, x, t) \leq &\min_{t\geq t_2\geq t_1\geq 0, y} \int_{t_2}^t e^{-\beta(\theta)} f^*\left(f^{\prime}\left(h(x,t, t_2)e^{\beta(\theta)}\right)\right)d\theta-\int_{t_1}^{t_2}f(\bar{u_b}(\theta))e^{-\beta(\theta)}d\theta\\ &+\int_0^{t_1} e^{-\beta(\theta)}f^*\left(f^{\prime}\left(h(-y,t_1)e^{\beta(\theta)}\right)\right)d\theta +\int_0^y u_0(\theta)d\theta :=\min_{t\geq t_2\geq t_1\geq 0, y}  B(x,y,t,t_1, t_2).
\end{aligned}
\end{equation*}
Here the minimum is taken over the points $(y, t_1, t_2),$ wherever the expression  $B(x,y,t,t_1, t_2)$ is meaningful.
Now as in the proof of \cref{thm explicit formula}, we rewrite the functional $A(y,x,t)$ and $B(x,y,t,t_1, t_2)$ in the following equivalent form:
\begin{align*}
&A(y,x,t)=\int_0^y u_0(\theta)d\theta-\int_{\tau}^t e^{-\beta(\theta)}f\left(h(x-y,\theta)e^{\beta(\theta)}\right)d\theta-\int_0^{\tau}
e^{-\beta(\theta)}f\left(h(x-y,\tau)e^{\beta(\theta)}\right)d\theta\\
&+(x-y)h(x-y,\tau),\\
&B(x,y,t,  t_1, t_2)=\int_0^y u_0(\theta)d\theta-\int_{t_1}^{t_2}e^{-\beta(\theta)}f(\bar{u_b}(\theta))d\theta-\int_{\tau}^{t_1}e^{-\beta(\theta)}f\left(h(-y,\theta)e^{\beta(\theta)}\right)d\theta\\
&-\int_0^{\tau}e^{\beta(\theta)}f\left(h(-y,\tau)e^{\beta(\theta)}\right)d\theta-yh(-y,\tau)+\int_0^x h(\theta, t, t_2)d\theta -\int_{t_2}^t e^{-\beta(\theta)}f\left(h(0,t,t_2)e^{\beta(\theta)}\right)d\theta.
\end{align*}
We further notice that there exists a sequence $\varepsilon_n \to 0$ such that the expression $B(0, y^*(0,t), t, t-\varepsilon_n)$ makes sense.
Since the {\em h-curve} joining the points $(0,t)$ to $(y^*(0,t),t)$ is a minimizer of the functional $A(y,x,t),$ we find
\begin{equation}\label{functional inequality}
    A(y^*(0,t), 0, t)\leq B(0, y^*(0,t), t, t-\varepsilon_n).
\end{equation} 
Since
\[\frac{(f^{\prime})^{-1}(0)}{e^{\displaystyle{\max_{\theta\in [t_2,t]}\beta(\theta)}}}\leq h(0,t,t_2)\leq \frac{(f^{\prime})^{-1}(0)}{e^{\displaystyle{\min_{\theta\in [t_2,t]}\beta(\theta)}}},\]
we have
\begin{equation*}
\lim_{t_2\to t}\int_{t_2}^t e^{-\beta(\theta)}f\left(h(0,t,t_2)e^{\beta(\theta)}\right)d\theta=0.
\end{equation*}
Hence from the inequality \eqref{functional inequality}, we have
\begin{align*}
&\int_0^{y^*(0,t)}u_0(\theta)d\theta-\int_{\tau}^t e^{-\beta(\theta)}f\left(h(-y^*(0,t),\theta)e^{\beta(\theta)}\right)d\theta-\int_0^{\tau}e^{\beta(\theta)}f\left(h(-y^*(0,t),\tau)e^{\beta(\theta)}\right)d\theta\\
&-y^*(0,t)h(-y^*(0,t),\tau)\\ \leq
&\int_0^{y^*(0,t)}u_0(\theta)d\theta-\int_{t-\varepsilon_n}^t e^{-\beta(\theta)}f(\bar{u_b}(\theta))d\theta-\int_{\tau}^{t-\varepsilon_n}e^{-\beta(\theta)}f\left(h(-y^*(0,t),\theta)e^{\beta(\theta)}\right)d\theta\\
&-\int_0^{\tau}e^{\beta(\theta)}f\left(h(-y^*(0,t),\tau)e^{\beta(\theta)}\right)d\theta-y^*(0,t)h(-y^*(0,t),\tau).
\end{align*}
This implies 
\[\int_{t-\varepsilon_n}^t e^{-\beta(\theta)}f(\bar{u_b}(\theta))d\theta \leq \int_{t-\varepsilon_n}^t e^{-\beta(\theta)}f\left(h(-y^*(0,t),\theta)e^{\beta(\theta)}\right)d\theta .\]
Now dividing by $\varepsilon_n$ on the both sides and passing to the limit as $\varepsilon_n \to 0,$ we get
$$f(\bar{u_b}(t))\leq f(u(0+,t)).$$ 
Note that if a boundary point $(0, t)$ is of \textit{type-I}, then $B(0,t)\leq A(0,t).$ Therefore in this case $(0,t)$ can not be of \textit{Type-I}. If $(0,t)$ is of \textit{type-II}, then we use the dynamic programming principle at a suitable time level to conduct the same proof described above. 
In the remaining case, if it is not of \textit{type-I} or \textit{type-II}, then the $h$-curve joining $(0, t)$ to $(y^*(0,t), 0)$ touches the $t$-axis at the points
$\{t_n\}$  such that $t_n<t_{n+1}$ and $t_n\to t.$ In this case, we can take $\varepsilon_n=t-t_n$ and follow the analysis as above.
This completes the proof of this case.\\
\noindent{\bf Case 2.} When $B(0, t) < A(0, t)$ and $\tau_*(x, t)$ does not converge to $t.$ This case is independent of the type of $(0, t).$ Let $\tau_*(x,t) \to t_1$ for some $t_1<t$. Then
$$W(0, t)=  \int_{t_1}^{t} f^*\left(f^{\prime}\left(h(0,t,t_1)e^{\beta(\theta)}\right)\right)e^{-\beta(\theta)}  d\theta + W(0, t_1).$$
Using dynamics programming principle, one can find a level $t_2$ with $t_1<t_2<t,$ such that
$$W(0, t)= \int_{t_2}^{t} f^*\left(f^{\prime}\left(h(0,t,t_1)e^{\beta(\theta)}\right)\right)e^{-\beta(\theta)}  d\theta + W(X(t_2), t_2),$$
where $X$ is the {\em h-curve} joining $(0, t)$ to $(0, t_1).$ Now following the analysis of {\bf Case 1}, we obtain
\begin{align*}
    f^{\prime}(u(0+, t)) \leq 0\,\,\, \text{and}\,\,\, f(\overline{u}_b(t)) \leq f(u(0+, t)).
\end{align*}
\noindent{\bf Case 3.} When $B(0, t) < A(0, t)$ and $\tau_*(x, t)$ converges to $t.$ If $(0,t)$ is of {\it Type-II,} then $\tau_*(x, t)$ does not converge to $t.$ Therefore in this case the boundary points $(0,t)$ are not of \emph{Type-II}. Therefore, this case has the following two subcases.

\noindent{\bf Subcase 1.} If $(0,t)$ is of {\it Type-I}, then for $(x, t)$ near to $(0, t)$, we have
$$W(x, t)=\int_{\tau_*(x,t)}^t f^*\left(f^{\prime}\left(h(x,t,\tau_*(x,t))e^{\beta(\theta)}\right)\right)e^{-\beta(\theta)}  d\theta -\int^{\tau_* (x, t)}_{\tau} f(\overline{u}_b(\theta))e^{-\beta(\theta)} d\theta + W(0, \tau).$$
Given any sequence $x_k \to 0,$  
$\dot{X}(\tau_* (x_k, t))=f^{\prime}\left(h(x_k, t, \tau_*(x_k, t)\right)e^{\beta (\tau_*(x_k, t))})\geq 0,$
where $X(\theta)$ is the {\em h-curve} joining $(x_k, t)$ to $(0,\tau_*(x_k, t)).$
If $(x_k, t)$ satisfies $f^{\prime}\left(h(x_k, t, \tau(x_k, t))e^{\beta (\tau(x_k, t))}\right)>0,$
then there exists a $\delta>0$  such that for $\tau_1 \in (\tau_*(x_k, t)-\delta, \tau_*(x_k, t)+\delta),$ the {\em h-curves} joining $(x_k, t)$ to $(0, \tau_1)$ lies completely in $Q.$ Then
$$B(\tau_1, x, t)=\int_{\tau_1}^t f^*\left(f^{\prime}\left(h(x,t,\tau_1)e^{\beta(\theta)}\right)\right)e^{-\beta(\theta)}  d\theta -\int^{\tau_1}_{\tau} f(\overline{u}_b(\theta))e^{-\beta(\theta)} d\theta + W(0, \tau).$$
attains its minimum at $\tau_1=\tau_* (x_k, t).$ 
Therefore $\frac{\partial}{\partial \tau_1} W(x, t, \tau_1)=0$ at $\tau_1=\tau_* (x_k, t).$ 
This implies $f(\bar{u}_b (\tau_* (x_k, t)))=f\left(h(x_k, t, \tau_* (x_k, t))e^{\beta(\tau_* (x_k, t))}\right).$ In any case, for a sequence $x_k\to 0,$ we have $f^{\prime}(u(0+, t))\geq 0$ and this implies $u(0+, t)= \bar{u}_b(t).$


\noindent{\bf Subcase 2.} If $(0,t)$ is of {\it Type-III}, then we have two possibilities. If there are infinitely many $k$ such that $x_k \to 0$ for which $\tau_*(x_k, t)\in [t_{2n_k}, t_{2n_k +1}),$ then the same analysis as in {\bf Subcase 1} can be followed to obtain $u(0+, t)= \bar{u}_b(t).$
 
If there are infinitely many $k$ such that $x_k \to 0$ for which $\tau_*(x_k, t)\in (t_{2n_k -1}, t_{2n_k}],$ then $\tau_*(x_k, t)=t_{2n_k}$ because two minimizers can not intersect by the non-intersecting property, \cref{NIP}. Further $f^{\prime}\left(h(x, t, t_{2n_k } )e^{\beta(t_{2n_k})}\right)=0$ as the {\em h-curve} joining $(x_k, t)$ to $(0,t_{2 n_k})$ and  the {\em h-curve} joining $(0,t_{2 n_k})$ to $(0,t_{2 n_{k -1}})$ is the single {\em h-curve} (since minimizer attend by {\em h-curves}) and  this concludes $h(x, t, t_{2n_k} )e^{\beta(t_{2n_k})}=\lambda_f.$ In this case $f(u(0+, t))=f(\lambda_f).$ Then $u(0+, t)=\lambda_f.$ In this case,
$f^{\prime}(u(0+, t))=0$ holds and by the definition of $\bar{u}_b, $ we obtain that $f(\bar{u}_b(t)) \geq f(u(0+, t)).$ Moreover, we have
\begin{align*}
  W(x_k, t)=\int_{t_{2n_k}}^t f^*\left(f^{\prime}\left(h(x_k, t, t_{2n_k})e^{\beta(\theta)}\right)\right)e^{-\beta(\theta)}d\theta+W(0, t_{2n_k})
\end{align*}
and for $x_k\to 0,$ there exists $0< h_{x_k} < x_k$ such that
\begin{align*}
   B(t_{2n_k}+h_{x_k}, x_k, t) \geq B(t_{2n_k}, x_k, t).
\end{align*}
This implies
\begin{align}\label{EQQ2.29}
   &\int_{t_{2n_k}+h_{x_k}}^t f^*\left(f^{\prime}\left(h(x_k, t, t_{2n_k}+h_{x_k})e^{\beta(\theta)}\right)\right)e^{-\beta(\theta)}d\theta\nonumber\\
    &-\int_{t_{2n_k}}^t f^*\left(f^{\prime}\left(h(x_k, t, t_{2n_k})e^{\beta(\theta)}\right)\right)e^{-\beta(\theta)}d\theta \geq W(0, t_{2n_k}+h_{x_k})-W(0, t_{2n_k})
\end{align}
From the definition of $W(0, t_{2n_k}),$ we have $W(0, t_{2n_k}+h_{x_k})-W(0, t_{2n_k})\geq \int_{t_{2n_k}}^{t_{2n_k}+h_{x_k}} f(\bar{u}_b(\theta))e^{-\beta(\theta)}d\theta.$
Hence, dividing by $h_{x_k}$ and passing to the limit as $x_k \to 0$ in the above inequality \eqref{EQQ2.29}, we conclude
\begin{align*}
   f(u(0+, t))\geq f(\bar{u}_b(t)).
\end{align*}
Note that the passage of limit $x_k\to 0$ in \eqref{EQQ2.29} can be executed similarly as in Step 2 of \cref{subsection 1}. Therefore we obtain $f(u(0+, t))= f(\bar{u}_b (t)).$
 \end{proof}
 \begin{remark}
 In the proof of Bardos- le Roux- N\'{e}d\'{e}lec condition, we used the continuity of the boundary data $u_b$ in addition to the BV regularity on $u_b.$
 For any BV data $u_b$, a variant of the proof used in the above lemma can be carried out to show the Bardos- le Roux- N\'{e}d\'{e}lec condition at the Lebesgue points of $u_b.$
 \end{remark}
 \begin{remark}
Let us comment on regularity assumptions on the boundary data $u_b.$ The BV regularity of $u_b$ is used to estimate the integral $I_2$ in \eqref{lipw2} of \cref{LIP Value}. We observe the following:\\
$\bullet$ If $u_b$ is only bounded measurable and {\em h-curves} meets finitely many times at the boundary $x=0,$ then we can still estimate $I_2$ of \cref{LIP Value}.

\noindent$\bullet$ If $u_b$ is only bounded measurable and {\em h-curve} meets the boundary infinitely many times, then as of now, it is not clear how to treat this case as we have difficulties in estimating $I_2$ of \cref{LIP Value}. However, \cref{example2} shows that in certain cases such a situation can occur where we consider $u_b$ as a step function.
 \end{remark}
 Now we can complete the proof of \cref{thm explicit formula}.
\begin{proof}[Proof of \cref{thm explicit formula}]  To complete the proof, it only remained to show the weak formulation.
In \cref{subsection 1}, we obtained
\begin{align*}
\begin{cases}
        &W_x=h\left(x-z(x,t, \tau), t, \tau\right):=w,\\
        &W_t=-f(e^{\beta(t)}w)e^{-\beta(t)}.
\end{cases}
\end{align*}
Therefore, we have
\begin{align*}
    0&=\int_0^{\infty}\int_0^{\infty}\left[W_t+e^{-\beta(t)}f(e^{\beta(t)}W_x)\right]\varphi_x (x,t) dx dt\\
    &=-\int_0^{\infty}\int_0^{\infty} W \varphi_{t x} + e^{-\beta(t)}f(e^{\beta(t)}W_x) \varphi_x dx dt-\int_0^{\infty}W(x,0)\varphi_x(x,0)dx\\
    &=\int_0^{\infty}\int_0^{\infty}W_x \varphi_t + e^{-\beta(t)}f(e^{\beta(t)}W_x) \varphi_x dx dt+ \int_0^{\infty} W_x(x,0)\varphi(x,0)dx\\
    &=\int_0^{\infty}\int_0^{\infty}w \varphi_t + e^{-\beta(t)}f(e^{\beta(t)}w) \varphi_x dx dt+ \int_0^{\infty} w(x,0)\varphi(x,0)dx.
\end{align*}
Thus $w$ satisfies $w_t+e^{-\beta(t)}f(e^{\beta(t)}w)_x=0$ in the sense of \cref{weak formulation} and since $w(x,0)=u_0(x),$ we conclude that $u$ satisfied the integral identity in \cref{weak formulation}. Moreover, in \cref{BDC lemma}, we showed that $u(\cdot, t)$ satisfies the boundary condition \eqref{boundary condition} for any fixed $t>0.$ This completes the proof.
\end{proof} 
\section{Construction of generalized characteristics}\label{section 4}
In this section, we will interpret the explicit formulae obtained in \cref{thm explicit formula} for the balance laws in terms of the generalized characteristics. 
\subsection{Proof of \cref{GC thm}} Inspired by \cite{neumann2021initial},  we begin with the definition of characteristic triangles.
\begin{definition}[\textbf{Characteristic triangles}]\label{Char triangles}
Let $x\geq 0$, $t>0$ and $A(x,t)$, $B(x,t)$ are given as in \eqref{definition A} and \eqref{definition B}.
\begin{enumerate}
\item[1.] For $A(x,t)< B(x,t),$ we define the characteristics triangle at the point $(x,t)$ as the region bounded by the {\em h-curves} joining the points $(x,t), (y_{*} (x,t), 0)$ and $(y^{*} (x,t), 0).$\\
\item[2.] For $A(x,t)>B(x,t),$ we define the characteristics triangle at the point $(x,t)$ as the region bounded by the {\em h-curves} joining the points $(x,t), (0, \tau_{*} (x,t),$ and $(0, \tau^{*} (x,t)).$\\
\item[3.] For $A(x,t)=B(x,t),$ we define the characteristics triangle at the point $(x,t)$ as the region bounded by the {\em h-curves} joining the points
 $(x,t)$, $(y^{*} (x,t),0)$, $(0, \tau^{*} (x,t))$ which includes the point $(0,0)$, where $(x,t)$ and $(y_{*} (x,t), 0)$ (resp. $(\tau^{*} (x,t), 0)$) are joined by a $h$-curves for initial (resp. boundary) functional.\\
 \item [4.] For $A(0, t) < B(0, t),$ we define the characteristic triangle as the region bounded by the {\em h-curve} $\gamma:[0, t] \to [0, \infty)$ joining $(0,t)$ to $(y^*(0,t), 0)$ and $\gamma(\theta)>0$ for all $\theta\in [0,t).$
\end{enumerate}
We denote the characteristic triangle associated with the point $(x,t)$ by $\Delta(x,t)$.
\end{definition}
We list some properties of characteristic triangles in the following lemma.
\begin{lemma}[\textbf{Properties of triangles}]\label{newlemma5.2}
Let $x\geq  0, t>0.$ Then we have the following properties of characteristic triangles.
\begin{enumerate}
        \item Let $t>0$ be fixed, and $x_1$, $x_2 > 0$, $x_1\neq x_2$ but arbitrary. Then the characteristic triangles associated with $(x_1,t)$ and $(x_2,t)$ do not intersect in the interior of $\mathbb{R}_+^2$.
        \item If two characteristic triangles intersect in $\mathbb{R}_+^2$, then one is contained in the other. 
        \item For any time $t_0 >0$ we have
\[
\bigcup_{x \in [0, \infty)}\Delta(x, t_0)=\{(x,t)| x \in [0, \infty), 0 \leq t \leq t_0\}\,.
\]
    \end{enumerate}
\end{lemma}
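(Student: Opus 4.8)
The plan is to prove the three properties in sequence, with property (1) as the foundation from which (2) and (3) will follow. The key tool throughout is the non-intersecting property established in Lemma \eqref{NIP} together with the monotonicity of the extreme characteristic points $y_*, y^*, \tau_*, \tau^*$ recorded in Lemma \eqref{lnew}. The central idea is that the boundary $h$-curves of a characteristic triangle $\Delta(x,t)$ are themselves minimizing $h$-curves (for either the initial or boundary functional), so two such curves emanating from distinct apices behave like two minimizers and hence cannot cross in the interior of $\mathbb{R}_+^2$.

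\textbf{Property (1).} Fix $t>0$ and take $x_1<x_2$. I would argue that the left and right boundary $h$-curves of each triangle are minimizing characteristics, so by Lemma \eqref{NIP} the minimizers through $(x_1,t)$ and $(x_2,t)$ do not intersect. The monotonicity relations from Lemma \eqref{lnew} — namely $y^{*}(x_1,t)\leq y_{*}(x_2,t)$ for the initial case and $\tau_{*}(x_1,t)\geq\tau^{*}(x_2,t)$ for the boundary case — guarantee that the base points of $\Delta(x_1,t)$ lie entirely to the left of (or below, on the $t$-axis) those of $\Delta(x_2,t)$. Combined with non-intersection of the bounding curves, this shows the open triangles are disjoint. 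The only delicate point is when $A(x_i,t)=B(x_i,t)$ (the Type-3/mixed triangle containing $(0,0)$): here one must check that the right boundary of $\Delta(x_1,t)$, which may run partly along the $t$-axis, still stays weakly left of the left boundary of $\Delta(x_2,t)$, again using the mixed monotonicity inequalities.

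\textbf{Property (2).} For two triangles $\Delta(x_1,t_1)$ and $\Delta(x_2,t_2)$ at possibly different times intersecting in $\mathbb{R}_+^2$, I would first reduce to the case $t_1<t_2$ (the equal-time case being covered by (1), which gives disjointness, so no nontrivial intersection occurs). The containment then follows from a dynamic-programming / restriction argument: if the apex-curve from $(x_2,t_2)$ passes through a point at height $t_1$, then the optimality structure forces its base points at time $0$ (and its boundary-contact times) to be squeezed between $y_*(x_1,t_1)$ and $y^*(x_1,t_1)$ (respectively $\tau_*, \tau^*$), precisely because any minimizer reaching inside $\Delta(x_1,t_1)$ must, by non-intersection, have its extreme points trapped by those of the lower triangle. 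This confines the entire lower portion of $\Delta(x_2,t_2)$ inside $\Delta(x_1,t_1)$.

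\textbf{Property (3).} The inclusion $\bigcup_{x}\Delta(x,t_0)\subseteq\{0\leq t\leq t_0\}$ is immediate since every triangle lies between the base/boundary at level $\leq t_0$ and its apex at level $t_0$. For the reverse inclusion I would fix an arbitrary $(x,t)$ with $0\leq t\leq t_0$ and run a continuity/sweeping argument: as the apex abscissa $s$ ranges over $[0,\infty)$, the triangles $\Delta(s,t_0)$ vary continuously (their extreme points are semicontinuous by Lemma \eqref{lnew}) and, by (1), sweep out the strip monotonically without gaps, so some $\Delta(s,t_0)$ must contain $(x,t)$. The hard part will be property (2): unlike the pure initial-value setting, here a characteristic may consist of countably many $h$-curve pieces and may slide along the $t$-axis, so the containment argument must carefully track whether the lower triangle is of initial, boundary, or mixed type, and show the non-intersecting property continues to pin down the nested structure across these regimes.
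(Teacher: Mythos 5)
Your toolkit --- the non-intersecting property of Lemma \eqref{NIP} together with the monotonicity and semicontinuity statements of Lemma \eqref{lnew} --- is exactly what the paper invokes (its own proof is a one-line reference to the monotonicity of minimisers and to the analogous argument in the cited conservation-law paper), and your treatments of properties (1) and (3) are consistent with that route. The genuine problem is property (2), where your containment goes in the wrong direction and, as written, does not prove the statement. With $t_1<t_2$, the only containment that can possibly hold is $\Delta(x_1,t_1)\subseteq\Delta(x_2,t_2)$: the later triangle can never sit inside the earlier one, because its apex $(x_2,t_2)$ lies above the line $\{t=t_1\}$ while $\Delta(x_1,t_1)$ lies entirely below it. Your conclusion --- ``this confines the entire lower portion of $\Delta(x_2,t_2)$ inside $\Delta(x_1,t_1)$'' --- is generally false: in the generic intersection scenario the earlier triangle sits strictly inside the later one (think of a small, possibly degenerate, triangle at a point of smoothness contained in a large triangle whose base is a whole shock interval), and there the lower portion of $\Delta(x_2,t_2)$ is much wider than $\Delta(x_1,t_1)$. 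Moreover, your hypothesis only covers the case in which a minimizer from $(x_2,t_2)$ passes through a point of $\Delta(x_1,t_1)$ at height $t_1$; since that triangle meets the line $\{t=t_1\}$ only at its apex, this is precisely the special case in which a bounding curve of the later triangle passes through $(x_1,t_1)$, and the generic intersection --- where no bounding curve of $\Delta(x_2,t_2)$ touches the earlier apex --- is not addressed at all. Finally, even if your inclusion did hold, it would not yield the lemma: neither triangle would then contain the other.

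The repair is the two-step scheme that Lemma \eqref{NIP} is designed for. Step one: intersection forces $(x_1,t_1)\in\Delta(x_2,t_2)$. Indeed, if the apex were outside while some point of $\Delta(x_1,t_1)$ lay in $\Delta(x_2,t_2)$, a bounding minimizer of $\Delta(x_2,t_2)$ would have to enter the interior of $\Delta(x_1,t_1)$; but a curve coming from above can enter that interior only through the apex or by meeting one of the extreme minimizers of $(x_1,t_1)$ at an interior point, and in the latter case Lemma \eqref{NIP} forces the two curves to coincide below the meeting point, which is impossible because their terminal points on the axes differ. Step two: once $(x_1,t_1)\in\Delta(x_2,t_2)$, the extreme minimizers emanating from $(x_1,t_1)$ start inside $\Delta(x_2,t_2)$ and, again by Lemma \eqref{NIP}, can never cross its bounding curves, so they remain inside; hence $\Delta(x_1,t_1)\subseteq\Delta(x_2,t_2)$, the opposite inclusion to the one you assert. (A small slip of the same kind occurs in your property (1): for $x_1<x_2$ the inequality $\tau_*(x_1,t)\geq\tau^*(x_2,t)$, which you quote correctly, means the boundary-contact points of $\Delta(x_1,t)$ lie \emph{above} those of $\Delta(x_2,t)$ on the $t$-axis, not below.)
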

\begin{proof}
The proof uses \cref{NIP} and \cref{lnew} and follows the similar lines given in \cite{neumann2021initial}.
\end{proof}
Our next goal is to prove \cref{GC thm}. We start by stating a more rigorous version of it.
\begin{theorem}\label{thm4.1}
Let $t_1>0$ be fixed. For each $(x_1,t_1)$ there exists a unique Lipschitz continuous curve $x=\mathtt{X}(t)$ with $\mathtt{X}(t_1)=x_1$ such that the characteristic triangles associated with each point on the curve form an increasing family of triangles and for any point $(x,t), t\geq t_1$ on the curve, we have the following:\\\\
(i) When $A(x,t)<B(x,t)$ we have
\begin{equation*}
\begin{aligned}
\lim_{t^{\prime\prime},t^{\prime}\to t}\frac{\mathtt{X}(t^{\prime\prime})-\mathtt{X}(t^{\prime})}{t^{\prime\prime}-t^{\prime}}=
\begin{cases}
f^{\prime}\left(h(x-y(x,t),t)e^{\beta(t)}\right)\,\,\,\,\, &\textnormal{if}\,\,\, y_*(x,t)=y^*(x,t),\\
\frac{f\left(h(x-y_*(x,t),t)e^{\beta(t)}\right)-f\left(h(x-y^*(x,t),t)e^{\beta(t)}\right)}{h(x-y_*(x,t),t)e^{\beta(t)}-h(x-y^*(x,t),t)e^{\beta(t)}}\,\,\,\,\, &\textnormal{if}\,\,\, y_*(x,t)<y^*(x,t).
\end{cases}
\end{aligned}
\end{equation*}
(ii) When $B(x,t)<A(x,t)$ we have
\begin{align*}
\lim_{t^{\prime\prime},t^{\prime}\to t}\frac{\mathtt{X}(t^{\prime\prime})-\mathtt{X}(t^{\prime})}{t^{\prime\prime}-t^{\prime}}=
\begin{cases}
f^{\prime}\left(h(x,t, \tau(x,t))e^{\beta(t)}\right)\,\,\,\,\, &\textnormal{if}\,\,\, \tau_{*}(x,t)=\tau^*(x,t),\\
\frac{f\left(h(x,t, \tau_{*}(x,t))e^{\beta(t)}\right)-f\big(h(x,t, \tau^*(x,t))e^{\beta(t)}\big)}{h(x,t, \tau_{*}(x,t))e^{\beta(t)}-h(x,t, \tau^*(x,t))e^{\beta(t)}}\,\,\,\,\, &\textnormal{if}\,\,\, \tau_{*}(x,t)<\tau^*(x,t).
\end{cases}
\end{align*}
(iii) When $A(x,t)=B(x,t)$ and $y^*(x,t)\neq0$ or $\tau^*(x,t)\neq 0,$ we have
\begin{align*}
    \lim_{t^{\prime\prime},t^{\prime}\to t}\frac{\mathtt{X}(t^{\prime\prime})-\mathtt{X}(t^{\prime})}{t^{\prime\prime}-t^{\prime}}=\frac{f\left(h(x-y^*(x,t),t)e^{\beta(t)}\right)-f\left(h(x,t, \tau^*(x,t))e^{\beta(t)}\right)}{h(x-y^*(x,t),t)e^{\beta(t)}-h(x,t, \tau^*(x,t))e^{\beta(t)}}.
\end{align*}
(iv) When $A(x,t)=B(x,t)$ and both $y^*(x,t)=0, \tau^*(x,t)=0$ we have
\begin{align*}
    \lim_{t^{\prime\prime},t^{\prime}\to t}\frac{\mathtt{X}(t^{\prime\prime})-\mathtt{X}(t^{\prime})}{t^{\prime\prime}-t^{\prime}}=f^{\prime}\left(h(x,t)e^{\beta(t)}\right).
\end{align*}
\end{theorem}
\begin{proof}
 At any level $t>t_1,$ by \cref{newlemma5.2} there exists a unique characteristics triangle containing the point $(x_1, t_1)$ with apex at some point denoted as $(\mathtt{X}(t), t).$  This procedure gives the existence and uniqueness of the curve $\mathtt{X}(t).$

Let $t<t^{\prime}<t^{\prime\prime}$ and $\mathtt{X}(t)=x,\mathtt{X}(t^{\prime})=x^{\prime},\mathtt{X}(t^{\prime\prime})=x^{\prime\prime}.$ For the sake of concreteness, henceforth, without loss of any generality, we assume that the {\em h-curve} joining any two points in $Q$ lies completely in $Q.$ In the case of the initial value problem, note that the functional $A(y_*(x^{\prime}, t^{\prime}), x^{\prime\prime}, t^{\prime \prime})$ in the inequality \eqref{IF inequality} is well defined as $y_*(x^{\prime}, t^{\prime}) \in H_A(x^{\prime\prime}, t^{\prime\prime}).$ Indeed, if the {\em h-curve} joining $(x^{\prime\prime}, t^{\prime\prime})$ to $(y_*(x^{\prime}, t^{\prime}), 0)$ leaves the domain $Q$ and comes back, then it has to meet the {\em h-curve} joining $(x^{\prime\prime}, t^{\prime\prime})$ to $(y_*(x^{\prime\prime},t^{\prime\prime}), 0)$ at say $(x_1, t_1)$ point. Hence we get a contradiction to the uniqueness of {\em h-curve} joining any two points in $Q,$ as $(x^{\prime\prime}, t^{\prime \prime})$ and  $(x_1, t_1)$ is joined by two different {\em h-curves}. The case of the boundary functional in \eqref{BF inequality} can also be treated in the same way using the dynamic programming principle, \cref{DPP}.

To prove $(i),$ first we consider the case $y_*(x,t)=y^*(x,t).$ In this case the slope $\frac{x^{\prime\prime}-x^{\prime}}{t^{\prime\prime}-t^{\prime}}$ lies between the slope of the {\em h-curve} joining the points $\{(x^{\prime\prime},t^{\prime\prime}), (y^*(x^{\prime\prime},t^{\prime\prime}), 0)\}$ and $\{(x^{\prime\prime},t^{\prime\prime}), (y_*(x^{\prime\prime},t^{\prime\prime}), 0)\}.$ The {\em h-curve} through the points $\{(x^{\prime\prime},t^{\prime\prime}), (y_*(x^{\prime\prime},t^{\prime\prime}), 0)\}$ is
\[X_1(\xi)=y_*(x^{\prime\prime},t^{\prime\prime})+\int_0^{\xi}f^{\prime}\left(h(x^{\prime\prime}-y_*(x^{\prime\prime},t^{\prime\prime}),t^{\prime\prime})e^{\beta(\theta)}\right)d\theta,\] and {\em h-curve} through the points $\{(x^{\prime\prime},t^{\prime\prime}), (y^*(x^{\prime\prime},t^{\prime\prime}), 0)\}$ is 
\[X_2(\xi)=y^*(x^{\prime\prime},t^{\prime\prime})+\int_0^{\xi}f^{\prime}\left(h(x^{\prime\prime}-y^*\left(x^{\prime\prime},t^{\prime\prime}),t^{\prime\prime}\right)e^{\beta(\theta)}\right)d\theta.\]
Hence 
\begin{equation}\label{equ58}
\frac{X_1(t^{\prime\prime})-X_1(t^{\prime})}{t^{\prime\prime}-t^{\prime}}\geq \frac{x^{\prime\prime}-x^{\prime}}{t^{\prime\prime}-t^{\prime}} \geq \frac{X_2(t^{\prime\prime})-X_2(t^{\prime})}{t^{\prime\prime}-t^{\prime}}, 
\end{equation}
i.e.,
\[\frac{\int_{t^{\prime}}^{t^{\prime\prime}}f^{\prime}\left(h(x-y_*(x^{\prime\prime},t^{\prime\prime}),t^{\prime\prime})e^{\beta(\theta)}\right)d\theta}{t^{\prime\prime}-t^{\prime}}\geq \frac{x^{\prime\prime}-x^{\prime}}{t^{\prime\prime}-t^{\prime}}\geq \frac{\int_{t^{\prime}}^{t^{\prime\prime}}f^{\prime}\left(h(x-y^*(x^{\prime\prime},t^{\prime\prime}),t^{\prime\prime})e^{\beta(\theta)}\right)d\theta}{t^{\prime\prime}-t^{\prime}}.\]
Now passing to the limit as $t^{\prime \prime}, t^{\prime} \to t$ and  using the monotonicity and semicontinuity of $y_*, y^*$ we obtain the first identity of $(i).$ Now consider the case $y_*(x,t)<y^*(x,t).$ From the definition of $A(y,x,t)$ we have
\begin{align}\label{IF inequality}
A(y^*(x^{\prime\prime},t^{\prime\prime}),x^{\prime\prime},t^{\prime\prime})-A(y_*(x^{\prime},t^{\prime}),x^{\prime\prime},t^{\prime\prime})\leq A(y^*(x^{\prime\prime},t^{\prime\prime}),x^{\prime},t^{\prime})-A(y_*(x^{\prime},t^{\prime}),x^{\prime},t^{\prime})
\end{align}
The above inequality can be rewritten as
\begin{equation}\label{equation4.3}
    \begin{aligned}
   &\big[A(y^*(x^{\prime\prime},t^{\prime\prime}),x^{\prime\prime},t^{\prime\prime})-A(y^*(x^{\prime\prime},t^{\prime\prime}),x^{\prime\prime},t^{\prime})\big]-\big[A(y_*(x^{\prime},t^{\prime}),x^{\prime\prime},t^{\prime\prime})-A(y_*(x^{\prime},t^{\prime}),x^{\prime\prime},t^{\prime})\big]\\
   &\leq \big[A(y^*(x^{\prime\prime},t^{\prime\prime}),x^{\prime},t^{\prime})-A(y^*(x^{\prime\prime},t^{\prime\prime}),x^{\prime\prime},t^{\prime})\big]-\big[A(y_*(x^{\prime},t^{\prime}),x^{\prime},t^{\prime})-A(y_*(x^{\prime},t^{\prime}),x^{\prime\prime},t^{\prime})\big].
    \end{aligned}
\end{equation}
We expand the above inequality step by step. Firstly, using \eqref{equation2.4} we have
\begin{equation}\label{equation4.4}
\begin{aligned}
   &A(y^*(x^{\prime\prime},t^{\prime\prime}),x^{\prime\prime},t^{\prime\prime})-A(y^*(x^{\prime\prime},t^{\prime\prime}),x^{\prime\prime},t^{\prime})\\
   &=-\int_{x^{\prime\prime}}^{y^*(x^{\prime\prime},t^{\prime\prime})}h(x^{\prime\prime}-\eta, t^{\prime\prime})d\eta-\int_0^{t^{\prime\prime}}e^{-\beta(\eta)}f\left(h(0,t^{\prime\prime})e^{\beta(\eta)}\right)d\eta\\
   &+\int_{x^{\prime\prime}}^{y^*(x^{\prime\prime},t^{\prime\prime})}h(x^{\prime\prime}-\eta, t^{\prime})d\eta+\int_0^{t^{\prime}}e^{-\beta(\eta)}f\left(h(0,t^{\prime})e^{\beta(\eta)}\right)d\eta.
\end{aligned}
\end{equation}
Using a similar calculation of \eqref{equation2.4}, one can obtain, for $0<\tau<t,$ 
\begin{align*}
    &\int_x^y h(x-\eta,t)dz +\int_0 ^t e^{-\beta(\eta)}f\left(h(0, t)e^{\beta(\eta)}\right)d\eta\\
&= \int_{\tau}^t e^{-\beta(\eta)}f\left(h(x-y,\eta)e^{\beta(\eta)}\right)d\eta+\int_0^{\tau}
e^{-\beta(\eta)}f\left(h(x-y,\tau)e^{\beta(\eta)}\right)d\eta-(x-y)h(x-y,\tau).
\end{align*}

Using the above equation in \eqref{equation4.4}, we get
\[A(y^*(x^{\prime\prime},t^{\prime\prime}),x^{\prime\prime},t^{\prime\prime})-A(y^*(x^{\prime\prime},t^{\prime\prime}),x^{\prime\prime},t^{\prime})=-\int_{t^{\prime}}^{t^{\prime\prime}}e^{-\beta(\eta)}f\left(h(x^{\prime\prime}-y^*(x^{\prime\prime},t^{\prime\prime}),\eta)e^{\beta(\eta)}\right)d\eta.\]
Similarly,
\[A(y_*(x^{\prime},t^{\prime}),x^{\prime\prime},t^{\prime\prime})-A(y_*(x^{\prime},t^{\prime}),x^{\prime\prime},t^{\prime})=-\int_{t^{\prime}}^{t^{\prime\prime}}e^{-\beta(\eta)}f\left(h(x^{\prime\prime}-y_*(x^{\prime},t^{\prime}),\eta)e^{\beta(\eta)}\right)d\eta.\]
Thus we find
\begin{align*}
       &\left[A(y^*(x^{\prime\prime},t^{\prime\prime}),x^{\prime\prime},t^{\prime\prime})-A(y^*(x^{\prime\prime},t^{\prime\prime}),x^{\prime\prime},t^{\prime})\right]-\left[A(y_*(x^{\prime},t^{\prime}),x^{\prime\prime},t^{\prime\prime})-A(y_*(x^{\prime},t^{\prime}),x^{\prime\prime},t^{\prime})\right]\\
       &=\int_{t^{\prime}}^{t^{\prime\prime}}e^{-\beta(\eta)}\left[f\left(h(x^{\prime\prime}-y_*(x^{\prime},t^{\prime}),\eta)e^{\beta(\eta)}\right)-f\left(h(x^{\prime\prime}-y^*(x^{\prime\prime},t^{\prime\prime}),\eta)e^{\beta(\eta)}\right)\right]d\eta.
    \end{align*}
and after a change of variable
\begin{equation*}
    \begin{aligned}
    &\left[A(y^*(x^{\prime\prime},t^{\prime\prime}),x^{\prime},t^{\prime})-A(y^*(x^{\prime\prime},t^{\prime\prime}),x^{\prime\prime},t^{\prime})\right]-\left[A(y_*(x^{\prime},t^{\prime}),x^{\prime},t^{\prime})-A(y_*(x^{\prime},t^{\prime}),x^{\prime\prime},t^{\prime})\right]\\
    &=\int_{x^{\prime}-y^*(x^{\prime\prime},t^{\prime\prime})}^{x^{\prime\prime}-y^*(x^{\prime\prime},t^{\prime\prime})}h(z,t^{\prime})dz-\int_{x^{\prime}-y_*(x^{\prime},t^{\prime})}^{x^{\prime\prime}-y_*(x^{\prime},t^{\prime})}h(z,t^{\prime})dz.
     \end{aligned}
\end{equation*}
Therefore from the inequality \eqref{equation4.3} we get
\begin{equation}\label{equation4.7}
   \frac{\frac{1}{t^{\prime\prime}-t^{\prime}}\int_{t^{\prime}}^{t^{\prime\prime}}e^{-\beta(\eta)}\left[f\left(h(x^{\prime\prime}-y_*(x^{\prime},t^{\prime}),\eta)e^{\beta(\eta)}\right)-f\left(h(x^{\prime\prime}-y^*(x^{\prime\prime},t^{\prime\prime}),\eta)e^{\beta(\eta)}\right)\right]d\eta}{\frac{1}{x^{\prime\prime}-x^{\prime}}\Big[\int_{x^{\prime}-y^*(x^{\prime\prime},t^{\prime\prime})}^{x^{\prime\prime}-y^*(x^{\prime\prime},t^{\prime\prime})}h(z,t^{\prime})dz-\int_{x^{\prime}-y_*(x^{\prime},t^{\prime})}^{x^{\prime\prime}-y_*(x^{\prime},t^{\prime})}h(z,t^{\prime})dz\Big]} \leq \frac{x^{\prime\prime}-x^{\prime}}{t^{\prime\prime}-t^{\prime}}.
\end{equation}
On the other hand, considering the inequality
\begin{equation*}
    \begin{aligned}
   &\big[A(y_*(x^{\prime\prime},t^{\prime\prime}),x^{\prime\prime},t^{\prime\prime})-A(y_*(x^{\prime\prime},t^{\prime\prime}),x^{\prime\prime},t^{\prime})\big]-\big[A(y^*(x^{\prime},t^{\prime}),x^{\prime\prime},t^{\prime\prime})-A(y^*(x^{\prime},t^{\prime}),x^{\prime\prime},t^{\prime})\big]\\
   &\leq \big[A(y_*(x^{\prime\prime},t^{\prime\prime}),x^{\prime},t^{\prime})-A(y_*(x^{\prime\prime},t^{\prime\prime}),x^{\prime\prime},t^{\prime})\big]-\big[A(y^*(x^{\prime},t^{\prime}),x^{\prime},t^{\prime})-A(y^*(x^{\prime},t^{\prime}),x^{\prime\prime},t^{\prime})\big]
    \end{aligned}
\end{equation*}
and simplifying it as before, we get
\begin{equation}\label{equation4.9}
   \frac{\frac{1}{t^{\prime\prime}-t^{\prime}}\int_{t^{\prime}}^{t^{\prime\prime}}e^{-\beta(\eta)}\Big[f(h(x^{\prime\prime}-y_*(x^{\prime},t^{\prime}),\eta)e^{\beta(\eta)})-f(h(x^{\prime\prime}-y^*(x^{\prime\prime},t^{\prime\prime}),\eta)e^{\beta(\eta)})\Big]d\eta}{\frac{1}{x^{\prime\prime}-x^{\prime}}\Big[\int_{x^{\prime}-y^*(x^{\prime\prime},t^{\prime\prime})}^{x^{\prime\prime}-y^*(x^{\prime\prime},t^{\prime\prime})}h(z,t^{\prime})dz-\int_{x^{\prime}-y_*(x^{\prime},t^{\prime})}^{x^{\prime\prime}-y_*(x^{\prime},t^{\prime})}h(z,t^{\prime})dz\Big]} \geq \frac{x^{\prime\prime}-x^{\prime}}{t^{\prime\prime}-t^{\prime}}.
\end{equation}
Passing to the limit as $t^{\prime\prime}, t^{\prime} \to t$ in \eqref{equation4.7}-\eqref{equation4.9} we obtain the second identity of $(i).$\\
For $(ii)$ we observe that the slope $\frac{x^{\prime\prime}-x^{\prime}}{t^{\prime\prime}-t^{\prime}}$ lies between the slope of the  {\em h-curve} joining the points $\{(x^{\prime\prime}, t^{\prime\prime}), (0, \tau_{*}(x^{\prime\prime}, t^{\prime\prime}))\}$ and the points $\{(x^{\prime\prime}, t^{\prime\prime}), (0, \tau^*(x^{\prime\prime}, t^{\prime\prime}))\}.$ The {\em h-curve} through $\{(x^{\prime\prime}, t^{\prime\prime}), (0, \tau_{*}(x^{\prime\prime}, t^{\prime\prime}))\}$ is
\[Y_1(\xi)=\int_{\tau_{*}(x^{\prime\prime}, t^{\prime\prime})}^{\xi} f^{\prime}\left(h(x^{\prime\prime},t^{\prime\prime},\tau_{*}(x^{\prime\prime}, t^{\prime\prime}) )e^{\beta(\theta)}\right)d\theta,\,\,\, Y_1(t^{\prime\prime})=x^{\prime\prime}\]
and  {\em h-curve} through $\{(x^{\prime\prime}, t^{\prime\prime}), (0, \tau^*(x^{\prime\prime}, t^{\prime\prime}))\}$ is 
\[Y_2(\xi)=\int_{\tau^*(x^{\prime\prime}, t^{\prime\prime})}^{\xi} f^{\prime}\left(h(x^{\prime\prime},t^{\prime\prime},\tau^*(x^{\prime\prime}, t^{\prime\prime}) )e^{\beta(\theta)}\right)d\theta, \,\,\, Y_2(t^{\prime\prime})=x^{\prime\prime}.\]
Hence 
\[\frac{Y_2(t^{\prime\prime})-Y_2(t^{\prime})}{t^{\prime\prime}-t^{\prime}}\geq \frac{x^{\prime\prime}-x^{\prime}}{t^{\prime\prime}-t^{\prime}} \geq \frac{Y_1(t^{\prime\prime})-Y_1(t^{\prime})}{t^{\prime\prime}-t^{\prime}}.\]
This implies
\[\frac{\int_{t^{\prime}}^{t^{\prime\prime}}f^{\prime}\left(h(x^{\prime\prime},t^{\prime\prime},\tau^*(x^{\prime\prime}, t^{\prime\prime}) )e^{\beta(\theta)}\right)d\theta}{t^{\prime\prime}-t^{\prime}}\geq\frac{x^{\prime\prime}-x^{\prime}}{t^{\prime\prime}-t^{\prime}} \geq \frac{\int_{t^{\prime}}^{t^{\prime\prime}}f^{\prime}\left(h(x^{\prime\prime},t^{\prime\prime},\tau_{*}(x^{\prime\prime}, t^{\prime\prime}) )e^{\beta(\theta)}\right)d\theta}{t^{\prime\prime}-t^{\prime}}. \]
Passing to the limit as $t^{\prime\prime},t^{\prime} \to t,$ we obtain the first identity of $(ii).$ To prove the second identity for the case $\tau_{*}(x,t)<\tau^*(x,t),$ observe that
\begin{align}\label{BF inequality}
B(\tau^*(x^{\prime\prime},t^{\prime\prime}), x^{\prime\prime},t^{\prime\prime})-B(\tau_{*}(x^{\prime},t^{\prime}), x^{\prime\prime},t^{\prime\prime})\leq B(\tau^*(x^{\prime\prime},t^{\prime\prime}), x^{\prime},t^{\prime})-B(\tau_{*}(x^{\prime},t^{\prime}), x^{\prime},t^{\prime}).
\end{align}
The above inequality can be rewritten as 
\begin{equation}\label{equ4.9}
    \begin{aligned}
    &\left[B( \tau^*(x^{\prime\prime},t^{\prime\prime}), x^{\prime\prime},t^{\prime\prime})-B(\tau^*(x^{\prime\prime},t^{\prime\prime}), x^{\prime\prime},t^{\prime})\right]
    -\left[B(\tau_*(x^{\prime},t^{\prime}), x^{\prime\prime},t^{\prime\prime})-B(\tau_*(x^{\prime},t^{\prime}),x^{\prime\prime},t^{\prime})\right]\\
    &\leq \left[B(\tau^*(x^{\prime\prime},t^{\prime\prime}), x^{\prime},t^{\prime})-B(\tau^*(x^{\prime\prime},t^{\prime\prime}), x^{\prime\prime},t^{\prime})\right]
    -\left[B(\tau_{*}(x^{\prime},t^{\prime}), x^{\prime}, t^{\prime})-B(\tau_{*}(x^{\prime},t^{\prime}), x^{\prime\prime},
    t^{\prime})\right].
    \end{aligned}
\end{equation}
Firstly, using \eqref{derivative in t} we have
\begin{align*}
    &B(\tau^*(x^{\prime\prime},t^{\prime\prime}), x^{\prime\prime},t^{\prime\prime})-B(\tau^*(x^{\prime\prime},t^{\prime\prime}), x^{\prime\prime},t^{\prime})
    =-\int_{t^{\prime}}^{t^{\prime\prime}}e^{-\beta(\eta)}f\left(h(x^{\prime\prime},\eta, \tau^*(x^{\prime\prime},t^{\prime\prime}))e^{\beta(\eta)}\right)d\eta,
\end{align*}
and similarly
\begin{equation*}
    \begin{aligned}
    &B(\tau_{*}(x^{\prime},t^{\prime}), x^{\prime\prime},t^{\prime\prime})-B(\tau_{*}(x^{\prime},t^{\prime}), x^{\prime\prime},t^{\prime})
    &=-\int_{t^{\prime}}^{t^{\prime\prime}}e^{-\beta(\eta)}f\left(h(x^{\prime\prime},\eta, \tau_{*}(x^{\prime},t^{\prime}))e^{\beta(\eta)}\right)d\eta.
    \end{aligned}
\end{equation*}
Hence L.H.S of the inequality \eqref{equ4.9} imply
\begin{align*}
    &\left[B(\tau^*(x^{\prime\prime},t^{\prime\prime}),x^{\prime\prime},t^{\prime\prime})-B(\tau^*(x^{\prime\prime},t^{\prime\prime}), x^{\prime\prime},t^{\prime})\right]
    -\left[B(\tau_{*}(x^{\prime},t^{\prime}), x^{\prime\prime},t^{\prime\prime})-B(\tau_{*}(x^{\prime},t^{\prime}), x^{\prime\prime},t^{\prime})\right]\\
    &=\int_{t^{\prime}}^{t^{\prime\prime}}e^{-\beta(\eta)}\left[f\left(h(x^{\prime\prime},\eta, \tau_{*}(x^{\prime},t^{\prime}))e^{\beta(\eta)}\right)-f\left(h(x^{\prime\prime},\eta, \tau^*(x^{\prime\prime},t^{\prime\prime}))e^{\beta(\eta)}\right)\right]d\eta.
    \end{align*}
Now using a similar calculation as \eqref{derivative in t}, for $t_2<\theta<t$ one obtains
\begin{equation*}
    \begin{aligned}
    &\int_{t_2}^t e^{-\beta(\theta)}f^*\Big(f^{\prime}\left(h(x,t,t_2)e^{\beta(\theta)}\right)\Big)d\theta\\
   & =\int_0^x h(\eta,t, t_2)d\eta-\int_{t_2}^te^{-\beta(\theta)}f\left(h(0,t,t_2)e^{\beta(\theta)}\right)d\theta.
    \end{aligned}
\end{equation*}
Applying the above equation in the R.H.S of \eqref{equ4.9} gives
\begin{equation*}
    \begin{aligned}
    &\big[B(\tau^*(x^{\prime\prime},t^{\prime\prime}), x^{\prime},t^{\prime})-B(\tau^*(x^{\prime\prime},t^{\prime\prime}), x^{\prime\prime},t^{\prime})\big]
    -\big[B(\tau_{*}(x^{\prime},t^{\prime}), x^{\prime}, t^{\prime})-B(\tau_{*}(x^{\prime},t^{\prime}), x^{\prime\prime},
    t^{\prime})\big]\\
    &=\int_{x^{\prime}}^{x^{\prime\prime}}h(\eta,t^{\prime}, \tau_{*}(x^{\prime},t^{\prime}))-h(\eta,t^{\prime}, \tau^*(x^{\prime\prime},t^{\prime\prime}))d\eta.
    \end{aligned}
\end{equation*}
This implies
\begin{equation}\label{equation4.10}
    \begin{aligned}
    \frac{\frac{1}{t^{\prime\prime}-t^{\prime}}\int_{t^{\prime}}^{t^{\prime\prime}}e^{-\beta(\eta)}\Big[f(h(x^{\prime\prime},\eta, \tau_{*}(x^{\prime},t^{\prime}))e^{\beta(\eta)})-f(h(x^{\prime\prime},\eta, \tau^*(x^{\prime\prime},t^{\prime\prime}))e^{\beta(\eta)})\Big]d\eta}{\frac{1}{x^{\prime\prime}-x^{\prime}}\int_{x^{\prime}}^{x^{\prime\prime}}h(\eta,t^{\prime}, \tau_{*}(x^{\prime},t^{\prime}))-h(\eta,t^{\prime}, \tau^*(x^{\prime\prime},t^{\prime\prime}))d\eta}\leq \frac{x^{\prime\prime}-x^{\prime}}{t^{\prime\prime}-t^{\prime}}.
    \end{aligned}
\end{equation}
Similarly from inequality
\begin{align*}
    &\left[B(\tau_{*}(x^{\prime\prime},t^{\prime\prime}), x^{\prime\prime},t^{\prime\prime})-B(\tau_{*}(x^{\prime\prime},t^{\prime\prime}), x^{\prime\prime},t^{\prime})\right]
    -\left[B(\tau^*(x^{\prime},t^{\prime}), x^{\prime\prime},t^{\prime\prime})-B(\tau^*(x^{\prime},t^{\prime}), x^{\prime\prime},t^{\prime})\right]\\
    &\leq \left[B(\tau_*(x^{\prime\prime},t^{\prime\prime}), x^{\prime},t^{\prime})-B(\tau_{*}(x^{\prime\prime},t^{\prime\prime}), x^{\prime\prime},t^{\prime})\right]
    -\left[B(\tau^{*}(x^{\prime},t^{\prime}), x^{\prime},t^{\prime})-B(\tau^*(x^{\prime},t^{\prime}), x^{\prime\prime},t^{\prime})\right]
\end{align*}
we get 
\begin{align}\label{equation4.11}
\frac{\frac{1}{t^{\prime\prime}-t^{\prime}}\int_{t^{\prime}}^{t^{\prime\prime}}e^{-\beta(\eta)}\Big[f(h(x^{\prime\prime},\eta, \tau_{*}(x^{\prime},t^{\prime}))e^{\beta(\eta)})-f(h(x^{\prime\prime},\eta, \tau^*(x^{\prime\prime},t^{\prime\prime}))e^{\beta(\eta)})\Big]d\eta}{\frac{1}{x^{\prime\prime}-x^{\prime}}\int_{x^{\prime}}^{x^{\prime\prime}}h(\eta,t^{\prime}, \tau_{*}(x^{\prime},t^{\prime}))-h(\eta,t^{\prime}, \tau^*(x^{\prime\prime},t^{\prime\prime}))d\eta}\geq \frac{x^{\prime\prime}-x^{\prime}}{t^{\prime\prime}-t^{\prime}}.
\end{align}
Passing to the limit $t^{\prime\prime},t^{\prime} \to t$ in \eqref{equation4.10}-\eqref{equation4.11}, we obtain the second identity of $(ii).$\\
For case $(iii),$ we observe the inequality
\begin{align*}
&B(\tau^*(x^{\prime\prime},t^{\prime\prime}), x^{\prime\prime},t^{\prime\prime})-A(y^*(x^{\prime},t^{\prime}),x^{\prime\prime},t^{\prime\prime})\leq B(\tau^*(x^{\prime\prime},t^{\prime\prime}), x^{\prime},t^{\prime})-A(y^*(x^{\prime},t^{\prime}),x^{\prime},t^{\prime}).
\end{align*}
Following the previous strategy the above inequality can be written as
\begin{align*}
&\big[B(\tau^*(x^{\prime\prime},t^{\prime\prime}), x^{\prime\prime},t^{\prime\prime}) -B(\tau^*(x^{\prime\prime},t^{\prime\prime}), x^{\prime\prime},t^{\prime})\big]-\big[A(y^*(x^{\prime},t^{\prime}),x^{\prime\prime},t^{\prime\prime})-A(y^*(x^{\prime},t^{\prime}),x^{\prime\prime},t^{\prime})\big]\\
&\leq \big[B(\tau^*(x^{\prime\prime},t^{\prime\prime}), x^{\prime},t^{\prime})-B(\tau^*(x^{\prime\prime},t^{\prime\prime}), x^{\prime\prime},t^{\prime})\big]-\big[A(y^*(x^{\prime},t^{\prime}),x^{\prime},t^{\prime})-A(y^*(x^{\prime},t^{\prime}),x^{\prime\prime},t^{\prime})\big].
\end{align*}
From the previous analysis, it follows directly that
\begin{align}\label{equation4.13}
\frac{\frac{1}{t^{\prime\prime}-t^{\prime}}\int_{t^{\prime}}^{t^{\prime\prime}}e^{-\beta(\eta)}\Big[f(h(x^{\prime\prime}-y^*(x^{\prime},t^{\prime}),\eta)e^{\beta(\eta)})-f(h(x^{\prime\prime},\eta, \tau^*(x^{\prime\prime},t^{\prime\prime}))e^{\beta(\eta)})\Big]d\eta}{\frac{1}{x^{\prime\prime}-x^{\prime}}\int_{x^{\prime}-y^*(x^{\prime},t^{\prime})}^{x^{\prime\prime}-y^*(x^{\prime},t^{\prime})}h(z,t^{\prime})d\eta-\frac{1}{x^{\prime\prime}-x^{\prime}}\int_{x^{\prime}}^{x^{\prime\prime}}h(\eta,t^{\prime}, \tau^*(x^{\prime},t^{\prime}))d\eta}\leq \frac{x^{\prime\prime}-x^{\prime}}{t^{\prime\prime}-t^{\prime}}.
\end{align}
 Similarly from the inequality 
\begin{align*}
A(y^*(x^{\prime\prime},t^{\prime\prime}),x^{\prime\prime},t^{\prime\prime})-B(\tau^*(x^{\prime},t^{\prime}), x^{\prime\prime},t^{\prime\prime})\leq A(y^*(x^{\prime\prime},t^{\prime\prime}),x^{\prime},t^{\prime})-B(\tau^*(x^{\prime},t^{\prime}), x^{\prime},t^{\prime})
\end{align*}
and hence from
\begin{align*}
&\big[A(y^*(x^{\prime\prime},t^{\prime\prime}),x^{\prime\prime},t^{\prime\prime})-A(y^*(x^{\prime\prime},t^{\prime\prime}),x^{\prime\prime},t^{\prime})\big]
 -\big[B(\tau^*(x^{\prime},t^{\prime}), x^{\prime\prime},t^{\prime\prime})-B(\tau^*(x^{\prime},t^{\prime}), x^{\prime\prime},t^{\prime})\big]\\&\leq \big[A(y^*(x^{\prime\prime},t^{\prime\prime}),x^{\prime},t^{\prime})-A(y^*(x^{\prime\prime},t^{\prime\prime}),x^{\prime\prime},t^{\prime})\big]-\big[B(\tau^*(x^{\prime},t^{\prime}), x^{\prime},t^{\prime})-B(\tau^*(x^{\prime},t^{\prime}), x^{\prime\prime},t^{\prime})\big]
\end{align*}
  we get
  \begin{equation}\label{equation4.14}
      \begin{aligned}
        \frac{\frac{1}{t^{\prime\prime}-t^{\prime}}\int_{t^{\prime}}^{t^{\prime\prime}}e^{-\beta(\eta)}\Big[f(h(x^{\prime\prime}-y^*(x^{\prime\prime},t^{\prime\prime}),\eta)e^{\beta(\eta)})-f(h(x^{\prime\prime},\eta, \tau^*(x^{\prime\prime},t^{\prime\prime}))e^{\beta(\eta)})\Big]d\eta}{\frac{1}{x^{\prime\prime}-x^{\prime}}\int_{x^{\prime}-y^*(x^{\prime\prime},t^{\prime\prime})}^{x^{\prime\prime}-y^*(x^{\prime\prime},t^{\prime\prime})}h(z,t^{\prime})d\eta-\frac{1}{x^{\prime\prime}-x^{\prime}}\int_{x^{\prime}}^{x^{\prime\prime}}h(\eta,t^{\prime}, \tau^*(x^{\prime},t^{\prime}))d\eta}\geq \frac{x^{\prime\prime}-x^{\prime}}{t^{\prime\prime}-t^{\prime}}.
      \end{aligned}
  \end{equation}
 Now passing to the limit as $t^{\prime\prime},t^{\prime} \to t$ in \eqref{equation4.13}-\eqref{equation4.14} we get the identity of $(iii).$\\
 Proof of $(iv)$ can be obtained from the following observation. When $A(x,t)=B(x,t),$
 \[\frac{Y_2(t^{\prime\prime})-Y_2(t^{\prime})}{t^{\prime\prime}-t^{\prime}}\geq \frac{x^{\prime\prime}-x^{\prime}}{t^{\prime\prime}-t^{\prime}} \geq \frac{X_2(t^{\prime\prime})-X_2(t^{\prime})}{t^{\prime\prime}-t^{\prime}}.\]
 Now from the definition of $X_2(\xi),Y_2(\xi)$ and using the fact $y^*(x,t)=\tau^*(x,t)=0$ we get $(iv).$
\end{proof}
In the next lemma, we show that the constructed curve actually starts from the $x$-axis or $t$-axis. The proof can be completed following exactly the same way as in \cite{neumann2021initial} and by appropriately replacing the characteristic lines by {\em h-curves}. Hence we omit it.
\begin{lemma}\label{lem:contcurves}
There is a countable set $\mathcal{S}$ of points on the $x$- and $t$-axis with the following properties.
\begin{enumerate}
\item \label{defcurvesX}For all $(\eta,0)\not\in \mathcal{S}$ there is a unique Lipschitz continuous curve $x=\mathtt{X}(\eta,t)$, $t\geq0$, such that $\mathtt{X}(\eta,0)=\eta$ and the characteristic triangles associated to points on the curve form an increasing family of sets.
\item \label{defcurvesY}For all $(0,\eta)\not\in \mathcal{S}$  there is a unique Lipschitz continuous curve $x=\mathtt{Y}(\eta,t)$, $t\geq\eta$, such that $\mathtt{Y}(\eta,\eta)=0$ and the characteristic triangles associated to points on the curve form an increasing family of sets.
\end{enumerate}
Further, for all $\eta>0$ such that $(\eta,0)$ and $(0,\eta)$ do not belong to $\mathcal{S}$,
\begin{equation*}
\begin{aligned}
\tfrac{\partial}{\partial t}\mathtt{X}(\eta, t)&= f^{\prime}\left(u(\mathtt{X}(\eta, t), t)\right) &&\text{for almost all $t>0$} \\
\tfrac{\partial}{\partial t}\mathtt{Y}(\eta, t)&= f^{\prime}\left(u(\mathtt{Y}(\eta, t), t)\right) &&\text{for almost all $t>\eta$}\,,
\end{aligned}
\end{equation*}
where the right-hand side is a measurable function.
\end{lemma}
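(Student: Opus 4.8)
The plan is to build the curves $\mathcal{X}(\eta,\cdot)$ and $\mathcal{Y}(\eta,\cdot)$ out of the nested family of characteristic triangles and to read off $\mathcal{S}$ as the countable jump set of the extreme foot-point functions. Recall from the preceding lemma that for fixed $t_0>0$ the triangles $\{\Delta(x,t_0):x\geq0\}$ tile the strip $\{0\le t\le t_0\}$, overlap only along their boundaries, and are laminar in the sense that two intersecting triangles are nested. Hence the assignment $x\mapsto\Delta(x,t)$ is monotone for inclusion, which is precisely the content of Lemma \eqref{lnew}: the boundary data $y_{*},y^{*}$ on the $x$-axis and $\tau_{*},\tau^{*}$ on the $t$-axis vary monotonically.

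First I would isolate the exceptional points. Each $\Delta(x,t)$ meets the boundary in a base: an interval $[y_{*}(x,t),y^{*}(x,t)]$ on the $x$-axis when $A(x,t)<B(x,t)$, an interval on the $t$-axis when $A(x,t)>B(x,t)$, or a two-sided base through $(0,0)$ in the balanced case. By the non-intersecting property of Lemma \eqref{NIP}, two such bases have disjoint interiors unless one triangle contains the other, so the bases with nonempty interior form a family of intervals with pairwise disjoint interiors, of which there are at most countably many. I would then declare $\mathcal{S}$ to be the union of the endpoints of these nondegenerate bases with the (at most countable) discontinuity points of the monotone maps $\eta\mapsto y_{*},y^{*}$ and $\eta\mapsto\tau_{*},\tau^{*}$; this is a countable subset of the two axes capturing exactly the rarefaction centres and shock feet.

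For a boundary point not in $\mathcal{S}$ the triangle germ is single-valued, and I would define $\mathcal{X}(\eta,t)$ (resp.\ $\mathcal{Y}(\eta,t)$) as the apex of the unique increasing family whose base shrinks to the single point $(\eta,0)$ as $t\downarrow0$ (resp.\ to $(0,\eta)$ as $t\downarrow\eta$). Uniqueness follows from the laminar structure, since two curves issuing from $\eta$ would generate crossing triangles; existence follows because for $\eta\notin\mathcal{S}$ the foot-point functions are continuous at $\eta$, so the apex varies continuously and the triangles grow monotonically. The Lipschitz bound is inherited from the uniform control of $h$-curve slopes in \eqref{estimate h}, which bounds $f^{\prime}(y_0e^{\beta(\theta)})$ on compact time intervals. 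The differential identity is then read off the previous theorem: writing $x=\mathcal{X}(\eta,t)$, for a.e.\ $t$ one has $y_{*}(x,t)=y^{*}(x,t)$, the set where they differ being the countable set of shock instants, and on this full-measure set case (i) gives
\[
\tfrac{\partial}{\partial t}\mathcal{X}(\eta,t)=f^{\prime}\big(h(x-y(x,t),t)e^{\beta(t)}\big)=f^{\prime}\big(u(\mathcal{X}(\eta,t),t)\big),
\]
using the explicit formula $u=e^{\beta(t)}h(\cdots)$; the analogous computation with $\tau_{*},\tau^{*}$ and case (ii) treats the portions where $A>B$, and cases (iii)--(iv) the balanced instants. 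Measurability of $t\mapsto f^{\prime}(u(\mathcal{X}(\eta,t),t))$ follows because $\mathcal{X}$ is Lipschitz and the monotone foot-point functions are measurable, so the composition is measurable.

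I expect the main obstacle to be the rigorous passage from the nesting/monotonicity of triangles to a genuinely single-valued initial foot point off $\mathcal{S}$, i.e.\ verifying both that $\mathcal{S}$ is countable and that the backward triangle germ degenerates to one boundary point there. The delicate case is the balanced regime $A=B$ together with the corner behaviour near $(0,0)$, where a curve may switch from being fed by initial data to being fed by boundary data; here one must check that the $x$-axis and $t$-axis families of bases glue into a single monotone family, so that one countable set $\mathcal{S}$ suffices for both assertions of the lemma.
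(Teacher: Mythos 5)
Your overall plan --- exploit the laminar structure of the characteristic triangles, excise a countable exceptional set on the two axes, define the curve through a nested family of triangles, and read the a.e.\ differential identity off the preceding theorem plus Lipschitz continuity --- is the same architecture as the paper's proof, which adapts the argument of the cited homogeneous initial-boundary work by replacing straight characteristics with $h$-curves. However, two of your steps genuinely fail. The most serious is your definition of $\mathcal{S}$: you take the endpoints of all nondegenerate bases $[y_{*}(x,t),y^{*}(x,t)]$ over \emph{all} apexes $(x,t)$ and claim countability from ``pairwise disjoint interiors,'' but, as you yourself concede, bases of nested triangles are nested rather than disjoint, and a laminar family of intervals can be uncountable. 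Indeed, along a single shock the bases grow with $t$, so their endpoints sweep out full intervals of the $x$-axis; your $\mathcal{S}$ would then be uncountable and the lemma would assert nothing. Endpoints of shock bases are in any case harmless: from such a point the forward curve is still unique (it is simply absorbed into the shock). The genuine obstruction points are the boundary points $\eta$ for which, at some time $t$, a \emph{nondegenerate interval of apexes} traces back to $\eta$ (rarefaction centres). The paper encodes these by $a^{-}(\eta,t)=\sup\{x:\,A\leq B,\ y_{*}(x,t)<\eta\}$, $a^{+}(\eta,t)=\inf\{x:\,A\leq B,\ y_{*}(x,t)>\eta\}$ and $S_a(t)=\{\eta:\,a^{-}(\eta,t)\neq a^{+}(\eta,t)\}$ (analogously $b^{\pm}$, $S_b(t)$ with $\tau_{*}$); for fixed $t$ the apex intervals $[a^{-}(\eta,t),a^{+}(\eta,t)]$ do have pairwise disjoint interiors, whence each $S(t)$ is countable. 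Even then a second argument, entirely missing from your proposal, is needed: countability of the union over the uncountably many times $t$. The paper gets this from monotonicity in time --- a fan centre at time $t$ is a fan centre at every earlier time, so $S(t)\subseteq S(t')$ for $t'<t$ and $\bigcup_{t>0}S(t)=\bigcup_{n\in\mathbb{N}}S(1/n)$.

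The second gap is your justification of $\tfrac{\partial}{\partial t}\mathcal{X}(\eta,t)=f'\big(u(\mathcal{X}(\eta,t),t)\big)$ a.e.: you assert that along the curve $y_{*}(\mathcal{X}(t),t)=y^{*}(\mathcal{X}(t),t)$ except on a ``countable set of shock instants.'' Absorption into a shock is permanent, so once the curve meets a shock one has $y_{*}<y^{*}$ on a whole time interval of positive measure; by Lemma \eqref{NIP} the curve cannot leave the shock, so this set of times is not countable and your argument collapses exactly where it is needed. The paper argues differently: the preceding theorem gives, at \emph{every} $t$, the limit of difference quotients of $\mathcal{X}(\eta,\cdot)$ via cases (i)--(iv) together with the definition of $u$, i.e.\ a one-sided derivative identity valid everywhere; since $\mathcal{X}(\eta,\cdot)$ is Lipschitz it is differentiable a.e., and at such times the derivative must agree with that formula. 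Measurability of the right-hand side is then obtained because $u(\mathcal{X}(\eta,t),t)$ arises as a limit of difference quotients of measurable functions, not by composing monotone foot-point maps as you suggest.
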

\begin{remark}
We constructed the curve $\mathtt{X}(t)$ in the quarter plane $x>0, t>0.$ If the curve reaches $x=0,$ then for that point $(0,t),$ it falls precisely under the case $A(0,t) \leq B(0,t)$ and we show the boundary condition by Bardos- le Roux- N\'{e}d\'{e}lec is satisfied, since $f^{\prime}(u(0+,t)) \leq 0.$
\end{remark}
\begin{remark}
Note that, for a fixed $t>0,$ if $A(x,t)=B(x,t)$ in an interval $[a(t), b(t)]$ then $ \tau^*(x,t)=\tau_{*}(x,t)=0,\,\, y_*(x,t)=y^*(x,t)=0.$ Indeed,  if for any point $(x_0, t_0) \in [a(t), b(t)],$  $\tau^*(x_0, t_0) >0,$ then by the non-intersecting property, \cref{NIP} we arrive at a contradiction. This property corresponds to the rarefaction solution.
\end{remark}
Next, we show that the constructed solution is entropy admissible in the sense of Lax.
\begin{theorem}[\textbf{Entropy condition}]
\label{Entropy condition}
Let $(x, t)\in Q$ be an interior point in the quarter plane and also be a point of discontinuity of $u(x,t)$ lying on the generalized characteristic   $\mathtt{X}(t)$. If  $\mathtt{X}$ is differentiable at $t,$ then 
\begin{equation}\label{entropy}
    f^{\prime}(u(x-,t))> \dot{\mathtt{X}}(t) > f^{\prime}(u(x+, t)). 
\end{equation}
Consequently, at almost every point of discontinuity of $u(x,t)$ along the corresponding generalized characteristic curve, the above inequality holds.
\end{theorem}
\begin{proof}
We need to prove \eqref{entropy} for $A(x,t)<B(x,t),$ $A(x,t)>B(x,t)$ and $A(x,t)=B(x,t).$ 
When $A(x,t)< B(x,t)$ and $y_*(x,t)<y^*(x,t),$ observe that $u(x-,t)=h(x-y_*(x,t),t)e^{\beta(t)}$ and $u(x+,t)=h(x-y^*(x,t),t)e^{\beta(t)}.$ Let $t<t^{\prime}<t^{\prime\prime}$ and $\mathtt{X}(t)=x,\mathtt{X}(t^{\prime})=x^{\prime},\mathtt{X}(t^{\prime\prime})=x^{\prime\prime}.$ The characteristic curve through the points $\{(x^{\prime\prime}, t^{\prime\prime}), (y_*(x^{\prime\prime}, t^{\prime\prime}), 0)\}$ is given by
\[X_1(\xi)=y_*(x^{\prime\prime},t^{\prime\prime})+\int_0^{\xi}f^{\prime}\left(h(x^{\prime\prime}-y_*(x^{\prime\prime},t^{\prime\prime}),t^{\prime\prime})e^{\beta(\theta)}\right)d\theta.\]
Also the characteristic curve through the points $\{(x^{\prime\prime}, t^{\prime\prime}), (y^*(x^{\prime\prime}, t^{\prime\prime}), 0)\}$ is given by
\[X_2(\xi)=y^*(x^{\prime\prime},t^{\prime\prime})+\int_0^{\xi}f^{\prime}\left(h(x^{\prime\prime}-y^*(x^{\prime\prime},t^{\prime\prime}),t^{\prime\prime})e^{\beta(\theta)}\right)d\theta.\]
Hence from \eqref{equ58} we have
\begin{equation}\label{equation5.13sec5}
    \frac{\int_{t^{\prime}}^{t^{\prime\prime}}f^{\prime}\big(h(x-y_*(x^{\prime\prime},t^{\prime\prime}),t^{\prime\prime})e^{\beta(\theta)}\big)d\theta}{t^{\prime\prime}-t^{\prime}}> \frac{x^{\prime\prime}-x^{\prime}}{t^{\prime\prime}-t^{\prime}}>\frac{\int_{t^{\prime}}^{t^{\prime\prime}}f^{\prime}\big(h(x-y^*(x^{\prime\prime},t^{\prime\prime}),t^{\prime\prime})e^{\beta(\theta)}\big)d\theta}{t^{\prime\prime}-t^{\prime}}.
\end{equation}
Now passing to the limit as $t^{\prime\prime}, t^{\prime} \to t$ in \eqref{equation5.13sec5} and using semicontinuty property of $y_*$,$y^*$ in \cref{lnew} we get the desired inequality \eqref{entropy}. 
 When $B(x,t)<A(x,t),$ we have $u(x-,t)=h(x,t, \tau^*(x,t))e^{\beta(t)}$ and $u(x+, t)=h(x, t, \tau_*(x,t))e^{\beta(t)}$ and the rest of the proof will be similar. For the case $A(x,t)=B(x,t),$ we have $u(x-,t)=h(x,t, \tau^*(x,t))e^{\beta(t)}$ and $u(x+, t)=h(x, t, y^*(x,t))e^{\beta(t)}$ and  \eqref{entropy} can be similarly proven. This completes the proof.
\end{proof}
\vspace{.5cm}
\subsection{Proof of \cref{UES}} In this section, our aim is to prove \cref{UES}. First, we recall some well-known definitions from  \cite{Bressanbook} below. We consider the homogeneous equation
\begin{align}\label{eqq3.15}
    w_t+F(w)_x=0, \,\,\,\, \text{where}\,\,\, w:=e^{-\beta(t)}u\,\,\, \text{and}\,\,\, F(w):= e^{-\beta(t)}f(e^{\beta(t)}w).
\end{align}
\begin{definition}[\textbf{Entropy flux pairs}]
    A $C^1$-function $\eta: \RR \to \RR$ is called an \textit{entropy} for \eqref{eqq3.15}, with \textit{entropy flux} $q: \RR\to \RR,$ if 
    \begin{align}\label{entropy flux pairs}
        \eta^{\prime}(w)\cdot F^{\prime}(w)=q^{\prime}(w)\,\,\, \text{for}\,\,\, w \in \RR.
    \end{align}
\end{definition}
\begin{definition}[\textbf{Kruzkov entropy inequality}] A weak solution $w \in L^{\infty}(\RR)\cap BV_{loc}(\RR)$ of \eqref{eqq3.15} is said to satisfy \textit{Kruzkov entropy inequality} if for every entropy pairs $(\eta, q)$ and for every non-negative $\varphi \in C^1_0([0, \infty)\times(0, \infty))$ the following integral inequality holds:
\begin{align}\label{KEI}
    \int_0^{\infty}\int_0^{\infty}\left(\eta(w)\frac{\partial \varphi}{\partial t}+q(w)\frac{\partial \varphi}{\partial t}\right)dxdt \geq -\int_0^{\infty}q(w(0+, t))\varphi(0, t)dt
\end{align}
\end{definition}
\begin{proof}[Proof of \cref{UES}] The proof is very similar to \cites{lefloch, adimurthi2000conservation}. For the sake of completeness, we only provide a sketch of the proof.
\\
\noindent\textbf{Step 1.} It is known that for every $k\in \RR,$ one can choose 
$\eta (w)=|w-k|$ and $q(w)=\left(F(w)-F(k)\right)sgn (w-k)$ which satisfy \eqref{entropy flux pairs}. Then from \eqref{KEI} we have
\begin{align}\label{eqq3.18}
    \int_0^{\infty}\int_0^{\infty}\left(|w(x,t)-k|\frac{\partial \varphi}{\partial t}+F(w(x,t), k)\frac{\partial \varphi}{\partial t}\right)dxdt \geq -\int_0^{\infty}F(w(0+, t), k)\varphi(0, t)dt
\end{align}
where \[F(w(x,t), k)=\frac{F(w(x,t))-F(k)}{(w(x,t)-k)}|w(x,t)-k|.\] Moreover, following the same calculations of \cite[page 35]{adimurthi2000conservation}, one can deduce that if $w$ is $C^1$ except a discrete set of Lipschitz curves, then the Lax entropy condition \eqref{entropy} implies \eqref{eqq3.18}.\\
\noindent\textbf{Step 2.} Next we use Kruzkov's idea of doubling the variables. Let $\varphi \geq 0$ be a compactly supported $C^1$ function of $x,t, y, s$ and taking $k=e^{-\beta(s)}v(y,s),$ and following the same proof of \cite[page 30-33]{adimurthi2000conservation} we obtain to the inequality (2.11) of \cite[page 33]{adimurthi2000conservation} which reads
\begin{align}\label{eqq3.19}
    \int_0^{\infty}\varphi^{\prime}(t)\int_{\max\{a+Mt, 0\}}^{b-Mt} e^{-\beta(t)}|u(x,t)-v(x,t)|dxdt \nonumber\\
    \geq -\int_0^{\max\{\frac{-a}{M}, 0\}}F\left(e^{-\beta(t)}u(0+, t), e^{-\beta(t)}v(0+, t)\right)\varphi(t)dt
\end{align}
where $b-a \geq 2MT$ and $0\leq \varphi\in C^{\infty}_0(0, T).$ If $a \geq 0,$ then $\max\{-\frac{-a}{M}, 0\}=0$ and hence $t\mapsto \int_{a+Mt}^{b-Mt}e^{-\beta(t)}|u(x,t)-v(x,t)|dx$ is a decreasing function. This implies
\begin{align*}
    \int_{a+Mt}^{b-Mt}e^{-\beta(t)}|u(x,t)-v(x,t)|dx \leq \int_a^b |u_0(x)-v_0(x)|dx
\end{align*}
which gives $u=v$ as $u_0(x)=v_0(x).$ Now let $a<0<b,, a+b\geq 0, c<0<-a, c-a\leq 0, T_0=\frac{-a}{M}=\frac{d}{M}.$ With this choice of $a,b,c, T_0 < \min\left\{\frac{b-a}{2M}, \frac{-(c+a)}{2M}\right\}$ and $\varphi \in C^{\infty}_0(0, T_0),$ from \eqref{eqq3.19} we get
\begin{align}\label{eqq3.20}
    \int_0^{T_0}\varphi^{\prime}(t)\int_{c+Mt}^{b-Mt}e^{-\beta(t)}|u(x,t)-v(x,t)|dxdt \geq -\int_0^{T_0}F\left(e^{-\beta(t)}u(0+, t), e^{-\beta(t)}v(0+, t)\right)\varphi(t)dt.
\end{align} 
Now it is enough to show that 
\begin{align}\label{eqq3.21}
    F\left(e^{-\beta(t)}u(0+, t), e^{-\beta(t)}v(0+, t)\right)=\frac{f(u(0+,t))-f(v(0+, t))}{u(0+,t)-v(0+, t)}e^{-\beta(t)}|u(0+, t)-v(0+, t)|\leq 0.
\end{align}
Without loss of any generality we assume $u(0+, t)\geq v(0+, t)$ and consider following four cases:\\
1. $f^{\prime}(u(0+, t))> 0$ and $f^{\prime}(v(0+, t))> 0,$\\
2. $f^{\prime}(u(0+, t))> 0$ and $f^{\prime}(v(0+, t))\leq 0,$\\
3. $f^{\prime}(u(0+, t))\leq 0$ and $f^{\prime}(v(0+, t)) > 0,$\\
4. $f^{\prime}(u(0+, t))\leq 0$ and $f^{\prime}(v(0+, t))\leq 0.$\\
In the first case, from the boundary condition \eqref{boundary condition} we get $u(0+, t)=\bar{u}_b(t)=v(0+, t)$ and hence \eqref{eqq3.21} is proved. In the second case, using boundary condition \eqref{boundary condition} we obtain
\[f(u(0+, t))-f(v(0+, t))\leq f(\bar{u}_b(t))-f(\bar{u}_b(t))=0.\] Note that since we assumed $u(0+, t)\geq v(0+, t)$ and $f$ is a $C^2$ convex function, we have $f^{\prime}(u(0+, t)) \geq f^{\prime}(v(0+, t)).$ Therefore the third case can not occur. In the fourth case, using the mean value theorem, we obtain
\begin{align*}
  \frac{f(u(0+,t))-f(v(0+, t))}{u(0+,t)-v(0+, t)}=f^{\prime}(\xi(t))\leq f^{\prime}(u(0+, t))\leq 0.  
\end{align*}
This completes the proof of all four cases. The rest of the proof can be completed by using the inequality in \eqref{eqq3.20} and for details see \cite[page 34]{adimurthi2000conservation}. Hence we have $u\equiv v.$
\end{proof}

\subsection*{Acknowledgement} The authors gratefully acknowledge the comments and suggestions made by the anonymous referee to improve the manuscript.

\vspace{.5cm}
\noindent\textbf{Conflict of interest.} On behalf of all authors, the corresponding author states that there is no conflict of interest.

\vspace{.5cm}
\noindent\textbf{Data availability.} This manuscript has no associated data.
\bibliographystyle{plain}	
\bibliography{Reference}

\end{document}